\numberwithin{equation}{section}
\theoremstyle{plain}
\newtheorem{thm}{Theorem}[subsection]
\newtheorem{lem}[thm]{Lemma}
\newtheorem{lemnt}[thm]{False Lemma}
\newtheorem{prop}[thm]{Proposition}
\newtheorem{cor}[thm]{Corollary}
\newtheorem*{thm*}{Theorem}
\newtheorem*{lem*}{Lemma}
\newtheorem*{prop*}{Proposition}
\newtheorem*{cor*}{Corollary}
\newtheorem*{thma*}{Theorem A}
\newtheorem*{thmb*}{Theorem B}
\theoremstyle{definition}
\newtheorem{defn}[thm]{Definition}
\newtheorem*{defn*}{Definition}
\newtheorem{ex}[thm]{Example}
{}
\newtheorem{rem}[thm]{Remark}
\newtheorem*{rem*}{Remark}
\newtheorem*{war*}{Warning}
\newtheorem{war}[thm]{Warning}
\newtheorem{hyp_plain}[thm]{Hypothesis}
\newtheorem*{hyp_plain*}{Hypotheses}
\newtheorem{notation}[thm]{Notation}{}
{}
{}
\newtheorem*{ack}{Acknowledgements}{}
\newtheorem{reminder}[thm]{Reminder}{}
\newtheorem{facts}[thm]{Facts}
\theoremstyle{remark}
{}
{}
{}
\def\to{\longrightarrow} 
\def\Ab{\mathsf{Ab}}
\def\AA{\mathbb{A}}
\def\FF{\mathbb{F}}
\def\NN{\mathbb{N}}
\def\PP{\mathbb{P}}
\def\QQ{\mathbb{Q}}
\def\ZZ{\mathbb{Z}}
\def\sfA{\mathsf{A}}
\def\sfB{\mathsf{B}}
\def\sfC{\mathsf{C}}
\def\sfD{\mathsf{D}}
\def\sfI{\mathsf{I}}
\def\sfJ{\mathsf{J}}
\def\sfK{\mathsf{K}}
\def\sfL{\mathsf{L}}
\def\sfN{\mathsf{N}}
\def\sfP{\mathsf{P}}
\def\sfQ{\mathsf{Q}}
\def\sfR{\mathsf{R}}
\def\sfS{\mathsf{S}}
\def\sfT{\mathsf{T}}
\def\sfU{\mathsf{U}}
\def\mcB{\mathcal{B}}
\def\mcM{\mathcal{M}}
\def\mcO{\mathcal{O}}
\def\mcS{\mathscr{S}}
\def\mcT{\mathscr{T}}
\def\mcV{\mathcal{V}}
\def\mcZ{\mathcal{Z}}
\def\mfp{\mathfrak{p}}
\def\mfm{\mathfrak{m}}
\def\mfq{\mathfrak{q}}
\def\op{\mathrm{op}}
\def\fp{\mathrm{fp}}
\def\unit{\mathbf{1}}
\def\Sierp{\mathbf{2}}
\DeclareMathOperator{\colim}{colim}
\DeclareMathOperator{\hocolim}{hocolim}
\DeclareMathOperator{\Hom}{Hom}
\DeclareMathOperator{\im}{im}
\DeclareMathOperator{\modu}{\mathsf{mod}}
\DeclareMathOperator{\Modu}{\mathsf{Mod}}
\DeclareMathOperator{\RHom}{\mathbf{R}Hom}
\DeclareMathOperator{\Coh}{Coh}
\DeclareMathOperator{\QCoh}{QCoh}
\DeclareMathOperator{\Spec}{Spec}
\DeclareMathOperator{\Spc}{Spc}
\DeclareMathOperator{\SPC}{SPC}
\DeclareMathOperator{\Supp}{supp}
\DeclareMathOperator{\supp}{Supp}
\DeclareMathOperator{\SUPP}{SUPP}
\DeclareMathOperator{\loc}{\mathrm{loc}}
\DeclareMathOperator{\thick}{thick}
\DeclareMathOperator{\Ker}{Ker}
\DeclareMathOperator{\serre}{serre}
\DeclareMathOperator{\ssupp}{Supp^\mathrm{s}}
\DeclareMathOperator{\sSupp}{supp^\mathrm{s}}
\DeclareMathOperator{\hsupp}{Supp^\mathrm{h}}
\DeclareMathOperator{\Spcs}{\Spc^\mathrm{s}}
\definecolor{internationalkleinblue}{rgb}{0.0, 0.18, 0.65}
\title{Big categories, big spectra}
\author{Scott Balchin}
\address{Scott Balchin, Max Planck Institute For Mathematics,
Vivatsgasse 7,
53111 Bonn.
Germany}
\email{balchin@mpim-bonn.mpg.de}
\author{Greg Stevenson}
\address{Greg Stevenson, Aarhus University, Department of Mathematics, Ny Munkegade 118, bldg. 1530
DK-8000 Aarhus C, Denmark
}
\email{greg@math.au.dk}
\begin{document}
\begin{abstract}
\noindent 
We introduce a new topological invariant of a rigidly-compactly generated tensor-triangulated category and two new notions of support. The first is based on smashing subcategories: it is unknown whether the frame of smashing subcategories is spatial in general, but supposing it is as in the examples we understand, we show it is equipped with a morphism to the Balmer spectrum which detects the failure of the telescope conjecture and we develop the corresponding support theory. The new invariant, the big spectrum, results from taking the entire collection of localizing ideals seriously and considering prime localizing ideals. Although there are, in principle, a proper class of localizing ideals, we are able to prove the existence of at least one big prime lying over every Balmer prime. We conclude with a pair of examples illustrating our constructions.
\end{abstract}
\thanks{The first author would like to thank the Max Planck Institute for Mathematics for its
hospitality. The second author would like to thank the Hausdorff Research Institute for Mathematics for their support.}

\maketitle

\setcounter{tocdepth}{1}
\tableofcontents





\section{Introduction}

There are, by now, many paths through the landscape of mathematics which pass through the forest of tt-geometry. Information freely travels along well-worn trails between algebraic geometry, topology, representation theory, and the theory of operator algebras. This has been made possible by a concerted effort to domesticate this formerly wild land using tools like the Balmer spectrum of an essentially small tensor-triangulated category (henceforth a `small tt-category') \cite{BaSpec} and Rickard idempotents for rigidly-compactly generated tt-categories \cite{BaRickard} (affectionately known as `big tt-categories').

However, many bastions of wilderness still remain, inhabited by bandersnatches and other such beasts (and no vorpal blade to slay them). For instance, we have made essentially no inroads into understanding big tt-categories where the telescope conjecture fails. Our existing methods are somehow reliant on every coproduct preserving monoidal localization  being a finite localization, i.e.\ corresponding to a localizing ideal generated by compact objects. Indeed, the evidence, e.g.\ \cite{StevensonActions}*{Theorem~7.15} and \cite{BarthelStrat}*{Theorem~9.11}, suggests that approaches based on the spectrum of the compacts are doomed to failure in such cases.

The aim of this paper is to shine a light on some dark corners of the forest. We consider, with a view toward classification problems, the frame of smashing tensor ideals (as studied in \cite{BKSframe} and \cite{Complex}) and the \emph{big spectrum}, which is a totally new invariant. These objects refine the spectrum of the compact objects and are better adapted to cases where the telescope conjecture fails. Our expectation is that the corresponding notions of support can be used to classify localizations in greater generality. We should emphasize that there is a technical difficulty in working with the frame of smashing localizations. Namely, we do not know whether it is spatial, i.e.\ is completely determined by a topological space, in general.

Let us now draw a rough map of the new regions available to us. Any rigidly-compactly generated tt-category $\sfT$ has a set $\mcS(\sfT)$ of \emph{smashing tensor ideals}, i.e.\ localizing ideals whose localization functor is given by tensoring with some idempotent object. These smashing tensor ideals form a lattice under inclusion, and it was shown in \cite{BKSframe} that this lattice is a frame\textemdash{}the motivation was to use tools from pointless topology to study $\sfT$. 

The first main result of this paper is a continuation of this work. We restrict ourselves to the \emph{a priori} special case that this frame is spatial and initiate the study of the corresponding support theory. Stone duality then affords us passage to our first space, the smashing spectrum $\Spcs(\sfT)$, together with the expected functoriality and a natural smashing support $\sSupp$.  This space refines $\Spc (\sfT^c)$, where $\sfT^c$ denotes the subcategory of compact objects, as in the following theorem which summarizes some highlights from Sections \ref{sec:smash} and \ref{sec:comparison}.

\begin{thma*}
Assuming that the frame of smashing ideals is spatial then the space $\Spcs(\sfT)$ has points given by prime smashing ideals. It is equipped with a continuous map
\[
\psi\colon \Spcs(\sfT) \to (\Spc (\sfT^c))^\vee \text{ defined by } \psi(\sfS) = \sfS\cap \sfT^c
\]
where $(\Spc (\sfT^c))^\vee$ is the Hochster dual of $\Spc \sfT^c$. This map is compatible with supports in the sense that $\psi^{-1}\Supp (t) = \sSupp (t)$ for every $t\in \sfT^c$. Moreover, $\psi$ is a homeomorphism if and only if the telescope conjecture holds for tensor ideals.
\end{thma*}

One benefit of the smashing support is that it is inherently defined for arbitrary objects of $\sfT$, with no topological assumptions on the smashing spectrum. Alas, this inherent support is analogous to taking supports of cohomology and so is not well adapted to studying general objects. To address this we introduce in Section~\ref{ssec:small} an analogue of the small support of Foxby \cite{Foxby} (along the lines of the support theory of Balmer and Favi \cite{BaRickard}). This support has good properties when every point of the smashing spectrum is locally closed, which is emerging as a key condition in the current approach to big tt-geometry, and we make a preliminary analysis.

In the original version of this article we made the assertion that the frame of smashing tensor ideals was always spatial. This was based on an incorrect identification of the meet of two subcategories with their intersection; a full account of our sins is given in Appendix~\ref{sec:fail}. It seems to be an important question to understand whether this frame is always spatial or if one needs to consider genuine pointless topology.

Our second and even more tantalizing proposal is to move away entirely from bootstrapping from compact objects and attempt to grapple directly with the whole collection of localizing ideals. This approach has the potential to produce, in full generality, a space analogous to $\Spc (\sfT^c)$ which always classifies radical localizing ideals.

However, one runs into immediate difficulties: the upper bound of a chain of proper localizing subcategories need not be proper, so our friend Zorn forsakes us, and we no longer know we are working with a set of subcategories. Nonetheless, we can consider prime localizing tensor ideals (see Definition~\ref{defn:bigprime}) and the collection $\SPC (\sfT)$ of such, which we call the \emph{big spectrum}. It comes with a notion of support which, in the examples we understand, is sufficiently fine to distinguish big objects and, crucially, is defined without imposing additional topological hypotheses.

While we do not know, in general, that $\SPC (\sfT)$ forms a set, we prove in Theorem~\ref{thm:enoughbigprimes} the existence of big primes for any big tt-category.

\begin{thmb*}
Given $\mathbf{p} \in \Spc (\sfT^c)$ there exists a big prime $\sfP$ such that $\sfP \cap \sfT^c = \mathbf{p}$.
\end{thmb*}

This proceeds via a rather suggestive connection between homological primes and big primes which warrants further exploration and could be relevant in the hunt for tt-residue fields.

The following diagram gives a summary of the spaces that appear in the paper, together with the comparison maps we are able to construct thus far. 
\[
\xymatrix{
\Spc^\mathrm{h}(\sfT^c) \ar[rr]^-\phi \ar[dr]_-\chi && \Spc (\sfT^c) & \\
& \SPC (\sfT) \ar[ur]_-\omega && \Spc^\mathrm{s}(\sfT) \ar[ul]^-\psi
}
\]
But, let us be clear that the diagram comes with a warning label\textemdash{}some maps only exist under certain hypotheses, they are not all continuous as displayed, and we don't know in general that $\Spc^\mathrm{s}(\sfT)$ is a good invariant. As in Theorem~A $\psi$ is only continuous when $\Spc(\sfT^c)$ is viewed with the dual topology, we only know $\omega$ exists when every localizing ideal is radical, and $\chi$ is continuous only for the coarsest of the natural topologies on $\SPC(\sfT)$ (cf.\ Proposition~\ref{prop:bigtosmall} for these last two statements).

We close the paper with two examples demonstrating our offering in action. In Section~\ref{sec:ringex} we return to a familiar friend, the derived category of a commutative noetherian ring. The telescope conjecture holds, so the smashing spectrum reduces to the usual one. We describe the smashing primes and prove that the refined smashing support recovers Foxby's small support. A complete description of the big spectrum is also given\textemdash{}again it is in natural bijection with points of the corresponding affine scheme and the associated support theory (which we emphasize again is defined without topological restriction) recovers the small support.

In the final section we examine a case where the telescope conjecture fails. We see concretely that the smashing spectrum is a finer invariant than the homological and Balmer spectra, and that the refined smashing support detects more than its counterpart valued in the spectrum of the compacts.

\begin{ack}
We are indebted to Paul Balmer for several valuable suggestions which greatly improved the exposition. Our thanks go out to Jan \v{S}\v{t}ov\'{\i}\v{c}ek for precious discussions (see Remark~\ref{ex:notacohframe}). We also thank Charalampos Verasdanis for helpful comments, and the anonymous referees for many helpful suggestions leading to a significantly improved exposition and the inclusion of some additional statements. Finally, we are grateful to Peter Scholze for questions which led to the identification of a significant error in the previous version.
\end{ack}




\section{Preliminaries}\label{sec:prelim}

\begin{hyp_plain*}
Throughout $(\sfT,\otimes,\unit)$ will denote a rigidly-compactly generated tensor-triangulated category. In particular, $\sfT$ admits all small coproducts, the compact and rigid objects coincide, and they moreover form an essentially small subcategory $\sfT^c$ which generates $\sfT$ as a localizing category. We will refer to such a $\sfT$ as a \emph{big tt-category}.
\end{hyp_plain*}

\begin{war*}
Throughout we shall denote by $\Supp (-)$, with appropriate decoration, the `native' support theory associated to some space, e.g.\ Balmer's support of compact objects or the (unrefined) smashing support of Definition~\ref{defn:ssupp}. We reserve the notation $\supp(-)$, and its variants, for the refinement of this support in terms of certain idempotent objects, and call it the \emph{small support}. This agrees with the conventions of Balmer--Favi~\cite{BaRickard} which have propagated throughout the literature on tt-categories. 

The warning is that this notation is in conflict with the usual notation for the sheaf-theoretic support and for the small support of Foxby for bounded complexes of flat modules~\cite{Foxby} (it is precisely reversed: over a ring $R$ the module support $\supp_R$ is refined by the small support $\Supp_R$). This is quite unfortunate from a story-telling point of view, but we stick to the abstract notions of support which provides adequate protection against confusion.
\end{war*}

\subsection{Stone duality}~\label{ssec:stone}

First off, let us give some definitions from lattice theory, and a quick summary of Stone duality which allows us to move between sober topological spaces and spatial frames without loss of information. We don't give examples here, but rather we spell out what these constructions offer us in practice throughout the sequel. Useful references are \cite{Johnstone} and \cite{SpectralBook} where the proofs we omit here (i.e.\ all of them) can be found.

Let $L$ be a complete lattice, i.e.\ a poset which admits an infimum and supremum for any subset of elements. We call these the meet and join respectively, and denote them by $\wedge$ and $\vee$. In this section we denote the minimal element of $L$ by $0$ and the maximal element by $1$. 

\begin{defn}
The complete lattice $L$ is a \emph{frame} if for any $a \in L$ and $\{b_i\mid i\in I\}\subseteq L$ we have
\[
a\wedge (\bigvee_{i\in I} b_i) = \bigvee_{i\in I} (a\wedge b_i),
\]
that is finite meets distribute over arbitrary joins. Given frames $L$ and $L'$ a morphism $f\colon L\to L'$ is a poset map which preserves arbitrary joins and finite meets.
\end{defn}

We denote by $\Sierp$ the frame $\{0\leq 1\}$, which should be thought of as the analogue of the topological space consisting of just a single point, and plays a distinguished role. 

\begin{defn}
A \emph{point} of a frame $L$ is a map $p\colon L\to \Sierp$. We define the \emph{spectrum} of $L$ to be
\[
\Spec (L) = \{p\colon L\to \Sierp\}
\]
and topologize this by declaring the subsets 
\[
U_a = \{p\in \Spec (L)\mid p(a) = 1\},
\]
indexed by elements $a\in L$, to be open. We denote the closed complement of $U_a$ by $V_a$. This is evidently contravariantly functorial: we have $\Spec = \Hom(-, \Sierp)$.
\end{defn}

We say that a frame $L$ is \emph{spatial} if whenever $a\nleq b$ in $L$ there is a point $p$ such that $p(a)=1$ and $p(b)=0$, i.e.\ $p$ witnesses that $a\nleq b$. The name is due to the fact that, as we will see below, a frame is spatial if and only if it is isomorphic to the lattice of open subsets of a topological space.

\begin{defn}\label{defn:cohframe}
An element $a\in L$ is \emph{finitely presented} if whenever $a\leq \vee \{b_i\mid i\in I\}$ there are $b_1,\ldots,b_n \in \{b_i\mid i\in I\}$ such that $a\leq b_1\vee \cdots \vee b_n$.

We say $L$ is a \emph{coherent frame} if the finitely presented elements of $L$ form a sublattice containing $0$ and $1$ (really this means $1$ is finitely presented and the finitely presented elements are closed under finite meets) and every element of $L$ is a join of finitely presented elements. It is automatic that a coherent frame is spatial.
\end{defn}

We now turn to the topological analogues. Recall that a space $X$ is \emph{sober} if every non-empty irreducible closed subset of $X$ has a unique generic point. A space $X$ is \emph{spectral} if it is sober, quasi-compact, and has a basis of quasi-compact open subsets closed under finite intersections. Given a space $X$ we denote by $\mcO(X)$ the lattice of open subsets of $X$, which is easily seen to be a frame. For a map $f\colon X\to Y$ we get $f^{-1}\colon \mcO(Y) \to \mcO(X)$ making $\mcO$ into a contravariant functor.

Stone duality is the assertion that sober and spectral spaces are the topological avatars of spatial and coherent frames respectively.

\begin{thm}\label{thm:stoneduality}
Let $L$ be a frame and $X$ a topological space. Then $\Spec (L)$ is sober and $\mcO(X)$ is a frame. In particular, we can restrict $\Spec$ and $\mcO$ to the category of spatial frames, with frame maps, and the category of sober spaces with continuous maps. Thus restricted the functors $\Spec$ and $\mcO$ are inverse to one another, i.e.\ they give a duality between spatial frames and sober spaces. Under this equivalence the coherent frames correspond to the spectral spaces.
\end{thm}
\begin{proof}
See \cite{Johnstone}*{Chapter~II} or \cite{SpectralBook}*{3.2.10}.
\end{proof}

Let $L$ be a spatial frame. The penultimate circle of ideas we will need concerns how to describe points of $\Spec (L)$ in terms of elements of $L$. Let $p\colon L\to \Sierp$ be a point. We can consider
\[
P = \bigvee \{a\in L \mid p(a) = 0\}.
\]
Since $p$ commutes with joins $p(P) = 0$, and so $P$ is the (unique) maximal element which $p$ sends to $0$. If $a\wedge b \leq P$ then $p(a) \wedge p(b) = 0$ and so one of $p(a)$ or $p(b)$ must be $0$, i.e.\ $p(a)\leq P$ or $p(b)\leq P$. Thus $P$ is \emph{meet-prime}: if $a\wedge b \leq P$ then $a\leq P$ or $b\leq P$. This sets up a bijection between points of $L$ and meet-prime elements of $L$ (see \cite{Johnstone}*{II.1.3}). Indeed, given a meet-prime $P$ the corresponding point $p$ is defined by
\[
p(a) = \begin{cases}
0  & \text{ if } a\leq P \\
1  & \text{ if } a\nleq P.
\end{cases}
\]

Let us summarize the key pieces we will need from the above discussion:

\begin{facts}\label{justthefactsmaam}
Let $L$ be a spatial frame.
\begin{itemize}
\item[(i)] $\Spec (L)$ is a sober space whose open and closed subsets are precisely the
\[
U_a = \{p \in \Spec (L)\mid p(a)=1\} \text{ and } V_a = \{p\in \Spec (L)\mid p(a)=0\}
\]
respectively.
\item[(ii)] $L$ is isomorphic as a lattice to $\mcO(\Spec (L))$, the lattice of open subsets of $\Spec (L)$ (with no prize for guessing the map). 
\item[(iii)] The points of $\Spec (L)$ are in bijection with the meet-prime elements of $L$. Viewing this as an identification we have
\[
U_a = \{P \in \Spec (L)\mid a\nleq P\} \text{ and } V_a = \{P\in \Spec (L)\mid a\leq P\}.
\]
\end{itemize}
\end{facts}

The final significant piece of lattice-theoretic technology we will need is that of \emph{Hochster duality} for spectral spaces \cite{HochsterSpectral}. Given a spectral space $X$ the Hochster dual $X^\vee$ has the same points as $X$ and the topology whose opens are generated by the closed subsets of $X$ whose complements are quasi-compact. In other words, the open subsets of $X^\vee$ are the Thomason subsets of $X$. Hochster proved that the space $X^\vee$ is again spectral and $(X^\vee)^\vee \cong X$. Note that \cite[1.4]{SpectralBook} refers to the Hochster dual topology as the \emph{inverse topology}.

Next, let us briefly recapitulate the two current topological invariants that we have available to us when working with a big tt-category, namely the \emph{Balmer} and \emph{homological} spectra.

\subsection{The Balmer spectrum}

Associated to any big tt-category $\sfT$ we have the \emph{Balmer spectrum} $\Spc(\sfT^c)$ which is the set of prime thick tensor ideals of $\sfT^c$ (i.e.\ proper thick subcategories closed under tensoring with arbitrary objects of $\sfT^c$ such that if $a \otimes b \in \sfP$ then $a \in \sfP$ or $b \in \sfP$)~\cite{BaSpec}. For $x \in \sfT^c$ the support of $x$ is
\[
\Supp(x) = \{\sfP \in \Spc(\sfT^c) \mid x \not\in \sfP \},
\]
and we take these as a basis of closed subsets for $\Spc(\sfT^c)$. A key point is that the Balmer spectrum is in fact a spectral space.  Under certain technical hypotheses one can extend the support to a theory valid for all objects of $\sfT$, which we call the small support and denote by $\supp$. We refer the reader to~\cite[Section 7]{BaRickard} for details.

More relevant to the material in the present paper will be the lattice-theoretic point of view. Denote by $\mcT(\sfT^c)$ the lattice of thick tensor ideals of $\sfT^c$. This lattice is a frame, which is moreover coherent~\cite[Theorem 3.1.9]{KockPitsch} and so, \emph{a fortiori} it is spatial, i.e.\ isomorphic to the lattice of open subsets of some space. 

Stone duality then gives rise to the spectral space $\Spec(\mcT(\sfT^c))$ whose points we can identify with the meet-prime elements of $\mcT(\sfT^c)$ and whose lattice of opens is isomorphic to $\mcT(\sfT^c)$. (An excellent overview is given in \cite{SpectralBook}*{Chapters~2 and 3} and we briefly recalled aspects of the construction in Section~\ref{ssec:stone}.)  The primes here coincide with those introduced by Balmer: there is a canonical bijection $\Spec(\mcT(\sfT^c)) \cong \Spc(\sfT^c)$. To relate the topologies, one first takes the Hochster dual of $\Spec(\mcT(\sfT^c))$, denoted $\Spec(\mcT(\sfT^c))^\vee$, whose open subsets are generated by the closed subsets of $\Spec(\mcT(\sfT^c))$ with quasi-compact open complements (see for example~\cite{SpectralBook}). By~\cite[Corollary 3.4.2]{KockPitsch} there is then a homeomorphism $\Spec(\mcT(\sfT^c))^\vee \cong \Spc(\sfT^c)$.

\subsection{The homological spectrum}

We denote by $\Modu \sfT^c$ the category of additive functors $(\sfT^c)^{\op} \to \Ab$. This is a Grothendieck category and is equipped, via Day convolution, with a closed symmetric monoidal structure. We denote by $\modu \sfT^c$ the (abelian) subcategory of finitely presented objects. We have the usual Yoneda embedding $h \colon \sfT^c \hookrightarrow \modu \sfT^c$ which can be extended to the \emph{restricted Yoneda functor} $h \colon \sfT \to \Modu \sfT^c$ by mapping $x \in \sfT$ to $h(x) = \hat{x}$ via
\[
\hat{x} \colon (\sfT^c)^{\op} \to \Ab \qquad \qquad \hat{x}(-) = \Hom_{\sfT}(-,x) {\mid}_{\sfT^c}.
\]
This restricted Yoneda is a conservative, (co)product preserving, and symmetric monoidal homological functor which fits into a commutative diagram
\[
\xymatrix{
\sfT^c \ar@{^(->}[d] \ar[r]^-h & \modu \sfT^c \ar@{^(->}[d] \\
\sfT \ar[r]_-h & \Modu \sfT^c \rlap{ .}
}
\]
Here $\modu \sfT^c$ is a monoidal subcategory of $\Modu \sfT^c$, and the tensor product on $\modu \sfT^c$ determines the one on the larger category by taking the unique colimit compatible extension. We refer the reader to the appendix of~\cite{BKSframe} and to \cite{BKSfield} for more details on this construction and the module category in general.

A \emph{homological prime} for $\sfT^c$ is defined to be a maximal proper Serre $\otimes$-ideal $\mcB \subset \modu \sfT^c$. The \emph{homological residue field}, $\overline{h}_{\mcB}$, corresponding to a homological prime $\mcB$ is constructed via the composite
\[\xymatrix@R=0em{
 \overline{h}_{\mcB} = Q_{\mcB} \circ h :& \sfT \ar[r]^-{h} & \Modu \sfT^c \ar@{->>}[r]^{Q_{\mcB}} & \frac{\Modu \sfT^c}{ \mcB^\shortrightarrow} \\
&x \ar@{|->}[r] & \hat{x} \ar@{|->}[r] & \overline{x} \;\; .
}\]
Here, $h \colon \sfT \to \Modu \sfT^c$ is the restricted Yoneda as defined above, $ \mcB^\shortrightarrow$ is the localizing subcategory in $\Modu \sfT^c$ generated by $\mcB$, and the quotient is the usual Gabriel quotient.

By assumption $\sfT^c$ is essentially small. Hence $\modu \sfT^c$ is also essentially small and so admits only a set of Serre subcategories. In particular, there is only a set of homological primes which we denote by $\Spc^\mathrm{h}(\sfT^c)$:
\[
\Spc^\mathrm{h}(\sfT^c) = \{\mcB \subset \modu \sfT^c \mid \mcB \text{ is a homological prime}\}.
\]
We equip this set with the topology generated by the closed subsets $\Supp^\mathrm{h} (x)$ for all $x \in \sfT^c$:
\[
\Supp^\mathrm{h}(x) := \{\mcB \in \Spc^\mathrm{h}(\sfT^c)  \mid \hat{x} \not\in \mcB \} = \left\{\mcB \in \Spc^\mathrm{h}(\sfT^c)  \mid \overline{x}\neq 0 \text{ in } \frac{\Modu \sfT^c}{ \mcB^\shortrightarrow  }  \right\}.
\]
We call $\Spc^\mathrm{h}(\sfT^c)$ along with this topology the \emph{homological spectrum}.

For every $\mcB \in \Spc^\mathrm{h}(\sfT^c)$ its preimage in $\sfT^c$ under the Yoneda embedding 
\[
\phi (\mcB) = h^{-1}(\mcB) \cap \sfT^c = \{x \in \sfT^c \mid \hat{x} \in \mcB \}
\]
is a Balmer prime ideal in $\sfT^c$. As such, there is a surjective (and continuous) map $\phi \colon \Spc^\mathrm{h}(\sfT^c) \to \Spc(\sfT^c)$~\cite[3.1]{BaHomological}. This can also be seen by using the fact that the Balmer support of the compact objects is the terminal support theory~\cite[Theorem 3.2]{BaSpec}.

A lattice theoretic interpretation of the homological spectrum does not appear in the literature, and it is unclear if such a perspective can exist. Indeed, it is not even known if the homological spectrum is $T_0$ in general (if it is $T_0$, then it will be homeomorphic to $\Spc(\sfT^c)$ via the comparison map $\phi$~\cite{BHS2021}).



\section{The smashing spectrum}\label{sec:smash}
\setcounter{subsection}{1}

Throughout this section we will make extensive use of the yoga of smashing ideals and their associated idempotents. We refer to \cite{BaRickard} for the core techniques and to \cite{BKSframe} for preliminaries on right (and dually left) idempotents in a symmetric monoidal category.

Let $\sfT$ be a big tt-category. We denote by $\mcS(\sfT)$ the collection of smashing tensor ideals. This collection forms a set (this was originally proved by Krause \cite{KrTele}) and is naturally ordered by inclusion. It is a complete lattice where finite meets are given by intersection and joins are given, as usual for such lattices, by taking the localizing subcategory generated by the union (see \cite{BKSframe}*{Remark~5.12} and Lemma~\ref{lem:join} for justifications). It was shown in \cite{BKSframe} that $\mcS(\sfT)$ is not only a complete lattice but a frame i.e.\ finite meets distribute over arbitrary joins.

\begin{hyp_plain}\label{thm:spatial}
From this point onward we will assume that $\mcS(\sfT)$ is spatial.
\end{hyp_plain}

\begin{rem}
Perhaps this is for free, or perhaps this is a serious restriction\textemdash{}we do not know (anymore). Even if it proves to be a restriction it is justified by the wealth of interesting examples coming from derived categories of valuation domains by work of Bazzoni and \v{S}\v{t}ov\'{\i}\v{c}ek \cite{BazzoniStovicek}*{Theorem~5.23}.
\end{rem}

\begin{reminder}\label{rem:isos}

We remind ourselves of the following diagram of isomorphisms from \cite{BKSframe}*{Remark~4.9}
\[
\kern-.5em
\xymatrix@C=4em@R=2em{
\kern-1em\left\{{{\displaystyle\textrm{smashing $\otimes$-ideals}}
 \atop{\vphantom{I^{I^I}}\displaystyle \sfS\subseteq\sfT}}\right\}
   \ar[d]_-{\textrm{\cite{BaRickard}}}
&
\left\{{{\displaystyle\textrm{localiz.\,$\otimes$-ideals $\mcB\subseteq\Modu \sfT^c$ s.t.\ }\mcB=(\mcB^\fp)^\shortrightarrow \textrm{ and}}
 \atop{\vphantom{I^{I^I}}\displaystyle \textrm{s.t.\ }\mathscr{J}=\{f\textrm{ in }\sfT^c \mid \im(\hat f)\in\mcB\}\textrm{ equals }\mathscr{J}^2}}\right\}
 \ar[l]_-{h^{-1}(\mcB)\, \mapsfrom \,\mcB}
\\
\left\{{{\displaystyle\textrm{right-idempotents }}
 \atop{\vphantom{I^{I^I}}\displaystyle \unit \to F \ \textrm{ in }\sfT}}\right\}_{\!\!/\simeq}
 \ar[r]_-{h}
&
\left\{{{\displaystyle\textrm{flat right-idempotents }}
 \atop{\vphantom{I^{I^I}}\displaystyle\unit \to F \textrm{ in }\Modu \sfT^c}}\right\}_{\!\!/\simeq}
  \ar[u]_-{[F]_\simeq\,\mapsto \,\Ker(F\otimes-)}
}
\]
Here $\mcB^\fp$ denotes the full subcategory of finitely presented objects of $\mcB$ and $(\mcB^\fp)^\shortrightarrow$ denotes its completion under colimits in $\Modu \sfT^c$, i.e.\ the smallest localizing subcategory containing $\mcB^\fp$.

These isomorphisms all respect the orderings and so are lattice isomorphisms. As such we will regularly abuse notation and identify idempotents with their image in the module category. For a right idempotent $F$ we set $\Ker(h(F)\otimes-) = \mcB_F$ as in the righthand vertical map above.
\end{reminder}

\subsection{The smashing support}

Our starting point, based on Hypothesis~\ref{thm:spatial}, is the following definition.

\begin{defn}\label{defn:sspec}
It follows from Hypothesis~\ref{thm:spatial} that there is a sober space $\Spc^\mathrm{s}(\sfT)$, the \emph{smashing spectrum of} $\sfT$, which is dual to $\mcS(\sfT)$ via Stone duality.
\end{defn}

\begin{rem}\label{rem:openclosed}
Let us take some time to unpack what this means. By Fact~\ref{justthefactsmaam}(iii) the points of $\Spc^\mathrm{s}(\sfT)$ are in bijection with meet-prime smashing ideals of $\sfT$, and we take this as the primitive notion (it will usually be more convenient for us than working with lattice morphisms to $\Sierp$). By Fact (ii) the lattice of open subsets of $\Spc^\mathrm{s}(\sfT)$ is isomorphic to $\mcS(\sfT)$. Given a smashing subcategory $\sfS$ the corresponding open subset of $\Spc^\mathrm{s}(\sfT)$ is
\[
U_\sfS = \{\sfP \in \Spc^\mathrm{s}(\sfT) \mid \sfS \nsubseteq \sfP\}
\]
and its closed complement is 
\[
V_\sfS = \{\sfP \in \Spc^\mathrm{s}(\sfT) \mid \sfS \subseteq \sfP\}.
\]
Let us emphasize that the closure of $\sfP \in \Spc^\mathrm{s}(\sfT)$, namely $V_\sfP$, is given by the meet-prime smashing ideals \emph{containing} $\sfP$ as in algebraic geometry (foreshadowing the order reversal of Theorem~\ref{thm:comparison}).
\end{rem}

\begin{rem}
The previous remark may cause some alarm\textemdash{}as well it should since there is an opportunity for confusion. However, let us reassure the reader that almost everything we do is very formal and is based on manipulating open and closed subsets and idempotents in a way that is insensitive to their precise form. The major difference from traditional tt-geometry is rather cosmetic, and consists of replacing Thomason subsets by opens.

Perhaps the most aggressive appearance of this reversal occurs in Lemma~\ref{lem:reverseit}. One sees that, even there, the phenomenon is quite mild and involves reasoning with familiar subsets in familiar ways.
\end{rem}

Going through the usual motions we arrive at our first new notion of support.
 
\begin{defn}\label{defn:ssupp}
The \emph{smashing support} of an object $X\in \sfT$ is
\[
\sSupp (X) = \{\sfP \in \Spc^\mathrm{s}(\sfT) \mid X\notin \sfP\}.
\]
\end{defn}

\begin{rem}
Let $x\in \sfT^c$ be a compact object. Then $\sfS = \loc^\otimes(x)$ is a smashing ideal. We have just unpacked that the corresponding open subset of $\Spc^\mathrm{s}(\sfT)$ is
\[
U_\sfS = \{\sfP \in \Spc^\mathrm{s}(\sfT) \mid \sfS \nsubseteq \sfP\} = \{\sfP \in \Spc^\mathrm{s}(\sfT) \mid x\notin \sfP\}.
\]
Thus it is natural to define the support of $x$ to be this open subset. Definition~\ref{defn:ssupp} is motivated by extending this observation to arbitrary objects.
\end{rem}

We note that the smashing support is always generization closed, simply because open subsets are so (this is analogous to specialization closure of the supports of compacts in $\Spc(\sfT^c)$).

\begin{war}
In the case of the Balmer spectrum, the topology is generated by the supports of compact objects. This is not necessarily the case for the smashing spectrum. We expect trouble when the telescope conjecture fails for $\sfT$, and we find it; we refer the reader to Remark~\ref{rem:notsmalltop} for an explicit example.
\end{war}

\begin{rem}
Denoting by $F_\sfP$ the right idempotent associated to a meet-prime smashing ideal $\sfP$ we have
\[
\sSupp (X) = \{\sfP \in \Spc^\mathrm{s}(\sfT) \mid X\otimes F_\sfP \neq 0\}.
\]
\end{rem}

\begin{lem}\label{lem:ssupp}
The smashing support satisfies the following properties:
\begin{itemize}
\item[(1)] $\sSupp(0) = \varnothing$ and $\sSupp(\unit) = \Spc^\mathrm{s}(\sfT)$;
\item[(2)] $\sSupp (\coprod_i X_i) = \cup_i \sSupp (X_i)$;
\item[(3)] $\sSupp (\Sigma X) = \sSupp (X)$;
\item[(4)] for any triangle $X \to Y \to Z$ we have $\sSupp (Y) \subseteq \sSupp (X) \cup \sSupp (Z)$;
\item[(5)] $\sSupp (X\otimes Y) \subseteq \sSupp (X)\cap \sSupp (Y)$;
\item[(6)] if $x,y\in \sfT^c$ then $\sSupp (x \otimes y) = \sSupp (x)\cap \sSupp (y)$.
\end{itemize}
\end{lem}
\begin{proof}
Properties (1)-(4) follow in a straightforward way from the fact that prime smashing ideals are localizing subcategories. Property (5) is an immediate consequence of the fact that they are ideals.

For (6) we need to show the reverse inclusion of (5). Since $x$ and $y$ are compact they generate smashing ideals $\sfS_x = \loc^\otimes(x)$ and $\sfS_y = \loc^\otimes(y)$. If $\sfP \in \sSupp x\cap \sSupp y$, i.e.\ $x\notin \sfP$ and $y\notin \sfP$ then $\sfS_x \nsubseteq \sfP$ and $\sfS_y \nsubseteq \sfP$. As $\sfP$ is meet-prime it follows that $\sfS_x\cap \sfS_y \nsubseteq \sfP$, and the claim then follows from Lemma~\ref{lem:tensor} which tells us that $\sfS_x \cap \sfS_y = \sfS_{x\otimes y}$ so $x\otimes y\notin \sfP$.
\end{proof}

\begin{rem}
Property (6) cannot be generalized to a `half $\otimes$-formula' where only one of the objects is asked to be compact. Consider $\sfT = \sfD(\ZZ)$ and take the smashing prime $\sfP$ given by the kernel of the localization to $\sfD(\ZZ_{(p)})$ (cf.\ Section~\ref{ssec:sp}). Then $\QQ$ and $\FF_p$ do not lie in $\sfP$ but their tensor product, which is $0$, does. However, this does hold when one can refine the support\textemdash{}see Proposition~\ref{prop:half}.
\end{rem}

Given localizing ideals $\sfL_1$ and $\sfL_2$ of $\sfT$ we denote by $\sfL_1\otimes \sfL_2$ the localizing ideal generated by the objects $L_1\otimes L_2$ with $L_i \in \sfL_i$. We will use identical notation, replacing localizing by thick, when working with thick ideals in $\sfT^c$ and rely on context to distinguish these.

\begin{lem}\label{lem:tensor}
If $\sfS$ and $\sfR$ are smashing ideals then $\sfS\cap \sfR = \sfS\otimes \sfR$. Moreover, if $\sfS = \loc(\sfS^c)$ and $\sfR = \loc(\sfR^c)$ then $\sfS \cap \sfR = \loc (\sfS^c \cap \sfR^c) = \loc(\sfS^c \otimes \sfR^c)$.
\end{lem}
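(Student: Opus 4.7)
The plan is to treat the two equalities separately: the first holds in full generality, while the second uses the compact generation hypothesis and is bootstrapped off the first.

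For $\sfS \cap \sfR = \sfS \otimes \sfR$, one inclusion is immediate: both $\sfS$ and $\sfR$ are ideals, so any tensor product $s\otimes r$ with $s\in \sfS$, $r\in \sfR$ lies in $\sfS \cap \sfR$, and since the intersection is localizing this gives $\sfS\otimes \sfR \subseteq \sfS \cap \sfR$. For the reverse, I would use the left idempotent $e_\sfS \to \unit$ associated to $\sfS$ (recalled in Reminder~\ref{rem:isos}), which has the properties that $e_\sfS$ itself lies in $\sfS$ and that $X\cong e_\sfS\otimes X$ for every $X\in \sfS$. Given $X \in \sfS \cap \sfR$, the isomorphism $X\cong e_\sfS \otimes X$ presents $X$ as a tensor product of an object of $\sfS$ with an object of $\sfR$, so $X\in \sfS\otimes \sfR$.

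Now assume $\sfS = \loc(\sfS^c)$ and $\sfR = \loc(\sfR^c)$. I first establish $\sfS\otimes \sfR = \loc(\sfS^c\otimes \sfR^c)$. The inclusion $\loc(\sfS^c\otimes \sfR^c)\subseteq \sfS\otimes\sfR$ is automatic since $\sfS^c\subseteq\sfS$ and $\sfR^c\subseteq\sfR$. For the other direction, fix $r\in \sfR$; because $-\otimes r$ is exact and coproduct-preserving, it sends the localizing subcategory generated by $\sfS^c$ into $\loc(\sfS^c\otimes r)$, so $s\otimes r\in \loc(\sfS^c\otimes r)$ for all $s\in \sfS$. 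A symmetric argument, now fixing a compact $s'\in \sfS^c$ and varying $r\in \sfR = \loc(\sfR^c)$, shows $\loc(\sfS^c\otimes r)\subseteq \loc(\sfS^c\otimes \sfR^c)$, yielding the claim.

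For the remaining equality $\loc(\sfS^c\cap \sfR^c) = \loc(\sfS^c\otimes \sfR^c)$, one direction is clear: since $\sfT^c$ is tensor-closed and each of $\sfS,\sfR$ is an ideal, $\sfS^c\otimes \sfR^c\subseteq \sfS^c\cap \sfR^c$. The reverse inclusion is where I would leverage what has already been proved: $\sfS^c\cap \sfR^c\subseteq \sfS\cap \sfR = \sfS\otimes \sfR = \loc(\sfS^c\otimes \sfR^c)$, whence $\loc(\sfS^c\cap \sfR^c)\subseteq \loc(\sfS^c\otimes \sfR^c)$. The main point to be careful about is the interpretation of $\sfS\otimes \sfR$ as the localizing subcategory generated by the pairwise tensor products (not just the naive class of such tensors), and confirming that the left idempotent $e_\sfS$ genuinely lies in $\sfS$; both are standard consequences of the smashing/idempotent correspondence and should present no real obstacle.
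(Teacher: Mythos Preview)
Your argument is correct and follows essentially the same route as the paper. The paper packages both the identity $\sfS\otimes\sfR=\loc^\otimes(E_\sfS\otimes E_\sfR)$ and the reduction $\loc^\otimes(\sfS^c)\otimes\loc^\otimes(\sfR^c)=\loc^\otimes(\sfS^c\otimes\sfR^c)$ as citations to \cite{StevensonActions}*{Lemma~3.12}, whereas you unpack these directly via the idempotent trick $X\cong e_\sfS\otimes X$ and the standard two-variable localizing argument. One small point of divergence: for $\loc(\sfS^c\cap\sfR^c)=\loc(\sfS^c\otimes\sfR^c)$ the paper invokes rigidity of $\sfT^c$ to identify $\sfS^c\cap\sfR^c$ with (the thick ideal generated by) $\sfS^c\otimes\sfR^c$, while you instead bootstrap from the first part via $\sfS^c\cap\sfR^c\subseteq\sfS\cap\sfR=\loc(\sfS^c\otimes\sfR^c)$; your route is arguably cleaner since it avoids a separate appeal to rigidity.
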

\begin{proof}
Let $E_\sfS$ and $E_\sfR$ denote the corresponding left idempotents, so that $\sfS = \loc^\otimes(E_\sfS)$ and similarly for $\sfR$. Then, by \cite{StevensonActions}*{Lemma~3.12} we have
\[
\sfS \otimes \sfR = \loc^\otimes(E_\sfS \otimes E_\sfR)
\]
and the latter is equal to $\sfS\cap \sfR$.

For the second statement suppose that $\sfS$ and $\sfR$ are generated by compact objects. Then, again using \cite{StevensonActions}*{Lemma~3.12}, we have
\[
\sfS \cap\sfR = \sfS \otimes \sfR = \loc^\otimes(\sfS^c) \otimes \loc^\otimes(\sfR^c) = \loc^\otimes(\sfS^c \otimes \sfR^c). \qedhere
\]
\end{proof}

\begin{rem}
Recall that we are assuming that $\sfT^c$ is rigid and so $\sfS^c \otimes \sfR^c = \sfS^c \cap \sfR^c$.
\end{rem}

\begin{rem}\label{ex:notacohframe}
Under the assumption that $\mcS(\sfT)$ is a spatial frame we know $\Spc^\mathrm{s}(\sfT)$ is sober. It is natural to ask when $\mcS(\sfT)$ is a coherent frame, i.e.\ when $\Spc^\mathrm{s}(\sfT)$ is a spectral space. This, it seems, is not for free.

Bazzoni and \v{S}\v{t}ov\'{\i}\v{c}ek have computed the frame $\mcS(\sfD(R))$ for $R$ a valuation domain \cite{BazzoniStovicek}*{Theorem~5.23}. It is given by certain subchains of the poset of intervals $[\mathfrak{i}, \mfp]$ in $\Spec R$ where $\mathfrak{i}^2 = \mathfrak{i}$. In \cite[Example 5.1]{Bazzoni15} Bazzoni gives an existence result for valuation domains with spectrum of a rather particular form. One needs to control exactly which prime ideals can be idempotent, but it should be possible to massage this into an example of a valuation domain such that the compact objects of $\mcS(\sfD(R))$ are exactly the singletons $\{[0,\mathfrak{i}]\}$ with $\mathfrak{i}^2 = \mathfrak{i}$. There are not enough of these intervals to form a basis of quasi-compact opens for the resulting topological space. 

In particular, this contradicts an assertion made in an earlier version of this article. The problematic point was that it is \emph{not} the case that an intersection of exact ideals of $\modu \sfT^c$ is necessarily an exact ideal (cf.\ the counterexamples in Section~\ref{sec:fail}). Hence one cannot guarantee the existence of perfect Serre tensor ideals generated by a set of finitely presented $\sfT^c$-modules. We are very grateful to Jan \v{S}\v{t}ov\'{\i}\v{c}ek for pointing out Bazzoni's example and explaining its relevance to us, and for Peter Scholze for a penetrating question on this issue. Both of these helped us to realize our reasoning had a gap, and to locate the issues.
\end{rem}


\subsection{Small smashing support}\label{ssec:small}

The notion of support defined in the previous section, while natural from the point of pointless topology, is not refined enough for effectively dealing with non-compact objects (cf.\ Lemma~\ref{lem:exsupp}). Since the dream is, at least in some examples (but unfortunately not all, cf.\ Remark~\ref{rem:fail}), to capture general localizing phenomena we need an analogue of the small support of Foxby~\cite{Foxby} (as extended in the work of several others, \cite{BaRickard} being most relevant to us here). 

We will end up using a similar approach to that of~\cite{BaRickard}. Given a smashing ideal $\sfS$ let us denote by $E_\sfS$ and $F_\sfS$ the associated left and right idempotents. 

Fix a meet-prime smashing ideal $\sfP$. For $X\in \sfT$ considering $X\otimes F_\sfP$ cuts us down to $\sfT/\sfP$ i.e.\ to considering the smashing ideals in the interval $[\sfP,\sfT]$. The primes in this interval give the closure $V_\sfP$ of $\sfP$ in $\Spc^\mathrm{s}(\sfT)$ (cf.\ Remark~\ref{rem:openclosed}). The problem is that simply looking here won't detect if $X$ really lives `further down' because if $\sfP\subseteq \sfQ$ and $X\otimes F_\sfQ\neq 0$ then necessarily this is the case for $X\otimes F_\sfP$. In other words, perhaps $\sfP \in \sSupp (X)$ is an artifact of $\sfQ \in \sSupp (X)$ and not immanent.

We thus want to ensure that for $\sfP$ to be in the support of $X$ that $X$ is `essential' in $\sfT/\sfP$, i.e.\ it doesn't come from a further localization. As in other versions of big tt-geometry we need hypotheses on the spectrum to get our way.

\begin{rem}
We have seen that $\mcS(\sfT)$ is likely not generally a coherent frame (Example~\ref{ex:notacohframe}) even when it is spatial as we are assuming. As such, we cannot take the Hochster dual to work with Thomason subsets as is traditional e.g.\ for $\Spc (\sfT^c)$. Consequently, let us reiterate that we will be interested in open subsets as opposed to the Thomason subsets which are used in~\cite{BaRickard}.
\end{rem}


\begin{defn}\label{defn:gamma}
Suppose that $\sfP$, viewed as a point of $\Spc^\mathrm{s}(\sfT)$, can be written in the form $\{\sfP\} = U_\sfS \cap V_\sfP$ for some open subset $U_\sfS$ (recall every open has this form\textemdash{}see Remark~\ref{rem:openclosed}). We define the \emph{Rickard idempotent at} $\sfP$ by 
\[
\Gamma_\sfP = E_\sfS \otimes F_\sfP.
\]
\end{defn}

\begin{rem}
A consequence of the above assumption is that $\{\sfP\}$ is locally closed (i.e. can be written as the intersection of an open and a closed subset). As we will show in Lemma~\ref{lem:Fp} the condition that $\{\sfP\} = U_\sfS \cap V_\sfP$ is in fact equivalent to $\{\sfP\}$ being locally closed.
\end{rem}

\begin{rem}
Again let us unwind what is happening here, using the description of the opens and closed subsets as in Remark~\ref{rem:openclosed}. We have 
\[
V_\sfP = \{\sfQ \in \Spc^\mathrm{s}(\sfT) \mid \sfP \subseteq \sfQ\} \text{ and } U_\sfS = \{\sfQ \in \Spc^\mathrm{s}(\sfT) \mid \sfS \nsubseteq \sfQ\}.
\]
For $\sfP$ to be locally closed we are asking that there is an $\sfS$ such that $\sfS \nsubseteq \sfP$ and such that $\sfP$ is maximal with this property. Tensoring with $F_\sfP$ lands us in $\sfT/\sfP$, where the image of $\sfS$ is not zero and hence the image of $E_\sfS$ is non-zero. Localizing at a bigger meet-prime smashing ideal, i.e.\ passing to $\sfT/\sfQ$ for $\sfQ\in V_\sfP \setminus \{\sfP\}$ would kill $E_\sfS$ and so we are isolating, via $\Gamma_\sfP = E_\sfS \otimes F_\sfP$, some information local to $\sfP$.
\end{rem}

\begin{rem}\label{rem:gammapnotzero}
The object $\Gamma_\sfP$ is not zero and we will prove it. Write $\{\sfP\} = U_\sfS \cap V_\sfP$ so $\Gamma_\sfP = E_\sfS \otimes F_\sfP$. This is zero if and only if $E_\sfS \in \sfP$ which is equivalent to $\sfS\subseteq \sfP$. We have explicitly chosen $\sfS$ so that this is not the case.
\end{rem}

Thus we want to work with spaces all of whose points are locally closed. This property turns out to be a separation axiom, lying between $T_0$ and $T_1$, which was originally studied in \cite{AullThron}.

\begin{defn}
A space $A$ is $T_D$ if every point of $A$ is locally closed.
\end{defn}

\begin{defn}\label{defn:smallsupport}
Suppose that $\Spc^\mathrm{s}(\sfT)$ is $T_D$. Then the \emph{small smashing support} of an object $X\in \sfT$ is
\[
\ssupp (X) = \{\sfP \in \Spc^\mathrm{s}(\sfT) \mid \Gamma_\sfP \otimes X \neq 0\}.
\]
\end{defn}

\begin{rem}
It is clear from the definition that the analogues of Lemma~\ref{lem:ssupp} (1)-(5) hold for the small smashing support.
\end{rem}

The notion of support we get doesn't depend on any choices we made regarding the $\Gamma_\sfP$.

\begin{lem}\label{lem:indep}
Let $\sfA, \sfB, \sfC$ and $\sfD$ be smashing ideals. If  
\[
U_\sfA \cap V_\sfB = U_\sfC \cap V_\sfD \text{ then } E_\sfA \otimes F_\sfB \cong E_\sfC \otimes F_\sfD.
\]
\end{lem}
\begin{proof}
We can split the proof into steps by first varying the open set and then varying the closed set. The proof of  \cite{BaRickard}*{Lemma~7.4} \emph{mutatis mutandis} applies to show these statements hold (their proof refers to two other results in their paper, but one is a general statement about idempotents and the other is baked into our setting so there is no problem).
\end{proof}

Moreover, if $\sfP$ is locally closed so one can isolate $\{\sfP\}$ via a closed and open subset then we can always assume that the closed subset involved is $V_\sfP$, so our definition of $\Gamma_\sfP$ using $F_\sfP$ is not restrictive.

\begin{lem}\label{lem:Fp}
Suppose that $\sfS$ and $\sfR$ are smashing ideals such that $U_\sfS \cap V_\sfR = \{\sfP\}$ i.e.\ $\sfP$ is locally closed. Then $U_\sfS \cap V_\sfP = \{\sfP\}$.
\end{lem}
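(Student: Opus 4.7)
The approach is to translate everything into the lattice of smashing ideals via Stone duality. Recall that for a smashing ideal $\sfA$, a meet-prime $\sfQ \in \Spc^\mathrm{s}(\sfT)$ lies in $U_\sfA$ precisely when $\sfA \nsubseteq \sfQ$, and hence lies in the closed complement $V_\sfA$ precisely when $\sfA \subseteq \sfQ$. In particular $V_\sfP = \{\sfQ \in \Spc^\mathrm{s}(\sfT) \mid \sfP \subseteq \sfQ\}$, which is nothing other than the closure of the singleton $\{\sfP\}$ in the Stone space.

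With this dictionary in hand, the proof is a single lattice observation. From $\sfP \in V_\sfR$ we deduce $\sfR \subseteq \sfP$, so $\sfP \subseteq \sfQ$ forces $\sfR \subseteq \sfQ$; equivalently, $V_\sfP \subseteq V_\sfR$. Intersecting with $U_\sfS$ then yields
\[
U_\sfS \cap V_\sfP \;\subseteq\; U_\sfS \cap V_\sfR \;=\; \{\sfP\}.
\]
For the reverse inclusion, $\sfP \in U_\sfS$ by hypothesis and $\sfP \in V_\sfP$ trivially, so $\sfP \in U_\sfS \cap V_\sfP$, giving equality.

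I do not anticipate any genuine obstacle: the content is just the fact that among all closed subsets which, together with the chosen open $U_\sfS$, isolate the locally closed point $\sfP$, one can always take the minimal one, namely the closure $V_\sfP$. This is precisely what is needed to see in Lemma~\ref{lem:indep} that the definition of $\Gamma_\sfP$ in Definition~\ref{defn:gamma} does not depend on the particular presentation of $\{\sfP\}$ as $U_\sfS \cap V_\sfR$.
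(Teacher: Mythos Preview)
Your argument is correct and essentially identical to the paper's: both observe that $\sfP \in V_\sfR$ forces the closure $V_\sfP$ of $\{\sfP\}$ to lie inside $V_\sfR$, whence $U_\sfS \cap V_\sfP \subseteq U_\sfS \cap V_\sfR = \{\sfP\}$, with the reverse inclusion immediate. You are just a bit more explicit about the lattice dictionary and the reverse containment, but there is no substantive difference.
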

\begin{proof}
Let $\sfR$ and $\sfS$ be as in the statement. We have $\sfP \in V_\sfR$ and hence the closure of $\sfP$ is contained in $V_\sfR$. The closure of $\sfP$ is precisely $V_\sfP$, so $V_\sfP \subseteq V_\sfR$ and it follows that $U_\sfS \cap V_\sfP = \{\sfP\}$.
\end{proof}

\begin{lem}\label{lem:bigvssmallsupp}
Suppose that $\Spc^\mathrm{s}(\sfT)$ is $T_D$. For any $X\in \sfT$ we have
\[
\ssupp (X) \subseteq \sSupp (X)
\]
with equality if $X$ is compact.
\end{lem}
\begin{proof}
Suppose that $\sfP \in \ssupp (X)$. Then necessarily $F_\sfP \otimes X\neq 0$ and so $\sfP\in \sSupp (X)$.

Now take $t\in \sfT^c$ and let $\sfP$ be a smashing prime with $t\notin \sfP$ i.e.\ $F_\sfP \otimes t \neq 0$ in $\sfT/\sfP$. Pick $\sfS$ such that $U_\sfS \cap V_\sfP = \{\sfP\}$. Because $\sfP$ is smashing $t\otimes F_\sfP$ is compact in $\sfT/\sfP$ and since $\sfP$ is prime any two non-zero smashing ideals of $\sfT/\sfP$ have non-zero intersection. The final preparatory observation we need is that the left idempotent for $\frac{\sfS \vee \sfP}{\sfP}$ in $\sfT/\sfP$ is $E_\sfS\otimes F_\sfP$. With this in mind we consider, in $\sfT/\sfP$,
\[
\loc^\otimes(t\otimes F_\sfP) \cap \frac{\sfS \vee \sfP}{\sfP} = \loc^\otimes(E_\sfS \otimes F_\sfP \otimes t)
\]
and see that it is zero if and only if $E_\sfS \otimes F_\sfP \otimes t$ is zero. We have assumed $t\otimes F_\sfP \neq 0$ and $\frac{\sfS \vee \sfP}{\sfP} \neq 0$ (since $\sfS \nsubseteq \sfP$) so this intersection cannot vanish. Hence $\sfP \in \ssupp (t)$ as desired.
\end{proof}

\begin{rem}
The topological condition $T_D$ is a familiar one, but in disguise. Indeed, (remembering that we are in the Hochster dual setting) a spectral space $A$ has the property that each of its points can be written as the intersection of a Thomason subset and the complement of a Thomason subset if and only if the Hochster dual $A^\vee$ is $T_D$. 

Note that it is possible to isolate the property of being $T_D$ using a more lattice theoretic formalism as we now describe. For a lattice $L$, we say that an element $b$ \emph{immediately precedes} $a$ if $b < a$ and if $b < c \leq a$ then we necessarily have $c=a$. A \emph{slicing filter} of a lattice $L$ is a prime filter $F$ such that there exist $a,b $ with $b \not\in F \ni a$, and $b$ immediately preceding $a$ in $L$. One can then show, as in~\cite{MR2868166}, that a $T_0$-space is $T_D$ if and only if each $N(x) = \{U \in \mcO(X) \mid x \in U\}$ is a slicing filter.
\end{rem}

Unlike its less refined cousin, the small smashing support does satisfy the half $\otimes$-formula.

\begin{prop}\label{prop:half}
Suppose that $\Spc^\mathrm{s}(\sfT)$ is $T_D$. If $t\in \sfT^c$ and $X\in \sfT$ then 
\[ \ssupp(t\otimes X) = \ssupp(t) \cap \ssupp(X).\]
\end{prop}
\begin{proof}
Let $t\in \sfT^c$ and $X\in \sfT$ and suppose that $\sfP \in \ssupp(t)$. In particular, $F_\sfP\otimes t$ is compact and not zero in $\sfT/\sfP$. As in the proof of Lemma~\ref{lem:bigvssmallsupp} we can consider $\sfS$ such that $U_\sfS \cap V_\sfP = \{\sfP\}$ and observe that in $\sfT/\sfP$,
\[
\loc^\otimes(F_\sfP\otimes t) \cap \frac{\sfS \vee \sfP}{\sfP} = \loc^\otimes(E_\sfS \otimes F_\sfP \otimes t) = \loc^\otimes(\Gamma_\sfP \otimes t) \neq 0.
\]
We claim that $\loc^\otimes(\Gamma_\sfP \otimes t) = \frac{\sfS \vee \sfP}{\sfP}$. We know $U_{\frac{\sfS \vee \sfP}{\sfP}} = \{0\}$ in $\Spc^\mathrm{s}(\sfT/\sfP)$ and so, by the above containment, $U_{\loc^\otimes(\Gamma_\sfP \otimes t)} = \{0\}$ and equality follows from spatiality of the frame of smashing ideals.

Suppose then that $\Gamma_\sfP \otimes t\otimes X \cong 0$. Then tensoring with $X$ kills $\loc^\otimes(\Gamma_\sfP \otimes t) = \frac{\sfS \vee \sfP}{\sfP}$ and so $\Gamma_\sfP \otimes X\cong 0$. 
\end{proof}

We conclude our musings on supports in the abstract by studying the small supports of left and right idempotents.

\begin{lem}\label{lem:ssuppE}
Let $\sfS$ be a smashing ideal with left and right idempotents $E_\sfS$ and $F_\sfS$. If $\sfP \in U_\sfS$ is locally closed then $\Gamma_\sfP \otimes E_\sfS \neq 0$ and $\Gamma_\sfP \otimes F_\sfS =0$.
\end{lem}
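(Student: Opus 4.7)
The plan is to establish the second assertion $\Gamma_\sfP \otimes F_\sfS = 0$ first and then deduce $\Gamma_\sfP \otimes E_\sfS \neq 0$ as an immediate consequence. The key point for the vanishing is that the definition of $\Gamma_\sfP$ only depends on $\sfP$ (and not on the specific open set used to isolate it), so we have considerable freedom to choose a convenient presentation of $\{\sfP\}$ as $U_{(-)} \cap V_\sfP$.

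For the vanishing: since $\sfP$ is locally closed, pick some smashing ideal $\sfA$ with $\{\sfP\} = U_\sfA \cap V_\sfP$, so that $\Gamma_\sfP \cong E_\sfA \otimes F_\sfP$. The hypothesis $\sfP \in U_\sfS$ says $\sfS \nsubseteq \sfP$, so $\sfP$ lies in the open set $U_\sfA \cap U_\sfS = U_{\sfA \wedge \sfS}$; as $U_{\sfA \wedge \sfS} \subseteq U_\sfA$ we still have $\{\sfP\} = U_{\sfA \wedge \sfS} \cap V_\sfP$. Lemma~\ref{lem:indep} then gives $\Gamma_\sfP \cong E_{\sfA \wedge \sfS} \otimes F_\sfP$. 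By Lemma~\ref{lem:tensor}, $\sfA \wedge \sfS = \sfA \cap \sfS = \sfA \otimes \sfS = \loc^\otimes(E_\sfA \otimes E_\sfS)$, so up to isomorphism $E_{\sfA \wedge \sfS} \cong E_\sfA \otimes E_\sfS$. Thus $\Gamma_\sfP$ factors through $E_\sfS$, lies in $\sfS$, and hence is annihilated by $F_\sfS$.

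For the non-vanishing: tensor the localization triangle $E_\sfS \to \unit \to F_\sfS$ with $\Gamma_\sfP$. The third term vanishes by the previous paragraph, so the first map is an isomorphism $\Gamma_\sfP \otimes E_\sfS \xrightarrow{\sim} \Gamma_\sfP$, and Remark~\ref{rem:gammapnotzero} ensures $\Gamma_\sfP \neq 0$. The only real technical point, and therefore the main (mild) obstacle, is the identification $E_{\sfA \wedge \sfS} \cong E_\sfA \otimes E_\sfS$, but this is essentially baked into Lemma~\ref{lem:tensor} together with the uniqueness of left idempotents for smashing ideals; everything else in the argument is formal manipulation of the localization triangle and the flexibility afforded by Lemma~\ref{lem:indep}.
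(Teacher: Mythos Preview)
Your proof is correct and follows essentially the same route as the paper: both arguments use Lemma~\ref{lem:indep} to rewrite $\Gamma_\sfP$ as $E_\sfA \otimes E_\sfS \otimes F_\sfP$ (the paper writes $\sfR$ for your $\sfA$), deduce that $\Gamma_\sfP \in \sfS$ so $\Gamma_\sfP \otimes F_\sfS = 0$, and conclude $\Gamma_\sfP \otimes E_\sfS \cong \Gamma_\sfP \neq 0$ via Remark~\ref{rem:gammapnotzero}. The only cosmetic difference is that you phrase the last step via the localization triangle, whereas the paper reads the isomorphism $E_\sfS \otimes \Gamma_\sfP \cong \Gamma_\sfP$ off directly.
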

\begin{proof}
Let $\sfR$ be a smashing ideal such that $\{\sfP\} = U_\sfR \cap V_\sfP$. Our interest is in $E_\sfS \otimes \Gamma_\sfP = E_\sfS \otimes E_\sfR \otimes F_\sfP$. The idempotent $E_\sfS\otimes E_\sfR$ corresponds to $\sfS\cap \sfR$ i.e.\ to the open subset $U_\sfS\cap U_\sfR$. By assumption $\sfP$ lies in both of these opens and so
\[
\{\sfP\} = U_\sfR \cap V_\sfP = U_\sfR \cap U_\sfS \cap V_\sfP.
\]
By Lemma~\ref{lem:indep} we deduce that
\[
\Gamma_\sfP = E_\sfR \otimes F_\sfP \cong E_\sfR \otimes E_\sfS \otimes F_\sfP.
\]
It follows that $E_\sfS \otimes \Gamma_\sfP$ is non-zero and, in fact, that $\Gamma_\sfP$ lies in $\sfS$ so $\Gamma_\sfP \otimes F_\sfS =0$.
\end{proof}

\begin{prop}\label{prop:ssuppE}
Suppose that $\Spc^\mathrm{s}(\sfT)$ is $T_D$. Let $\sfS$ be a smashing ideal of $\sfT$. Then
\[
\ssupp (E_\sfS) = U_\sfS \text{ and } \ssupp (F_\sfS) = V_\sfS.
\]
\end{prop}
\begin{proof}
Let $\sfS$ be a smashing ideal. By Lemma~\ref{lem:ssuppE} we have $U_\sfS \subseteq \ssupp (E_\sfS)$. If $\sfP \in \ssupp (E_\sfS)$ then, in particular, $E_\sfS \otimes F_\sfP \neq 0$. Hence $E_\sfS \notin \sfP$, i.e.\ $\sfS \nsubseteq \sfP$ and so $\sfP \in U_\sfS$. This proves the claimed equality.

Similarly, we have shown in the last lemma that $\ssupp (F_\sfS) \subseteq V_\sfS$. If $\sfP \in V_\sfS$ then we have just shown that $E_\sfS \otimes \Gamma_\sfP = 0$. It follows that $F_\sfS \otimes \Gamma_\sfP \cong \Gamma_\sfP$ and so $\sfP \in \ssupp (F_\sfS)$.
\end{proof}

%

\subsection{Functoriality of the smashing spectrum}

We now discuss functoriality of the construction. Suppose that $F\colon \sfT \to \sfU$ is an exact and coproduct preserving symmetric monoidal functor between big tt-categories. That $\sfT$ and $\sfU$ are rigidly-compactly generated and $F$ is symmetric monoidal conspire to guarantee that $F$ preserves compact objects. 

\begin{lem}
If $\sfS$ is a smashing subcategory of $\sfT$ with left idempotent $E$ then $F(E)$ is a left idempotent in $\sfU$ and the associated smashing subcategory of $\sfU$ is $\loc^\otimes(F\sfS)$. The analogous statement holds for right idempotents. 
\end{lem}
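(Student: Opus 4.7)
The plan is threefold: verify that $F(E)$ is a left idempotent in $\sfU$, identify the smashing subcategory it determines as $\loc^\otimes(F\sfS)$, and then deduce the right-idempotent statement by applying $F$ to the canonical triangle.

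First, recall that $E$ is equipped with a morphism $\varepsilon\colon E \to \unit$ such that $E \otimes \varepsilon \colon E \otimes E \to E$ is an isomorphism. Since $F$ is strong monoidal, $F(\unit) \cong \unit_\sfU$ and $F(E \otimes E) \cong F(E) \otimes F(E)$ naturally, so $F(\varepsilon)$ exhibits $F(E)$ as a left idempotent in $\sfU$.

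Next I would exploit that the smashing subcategory determined by a left idempotent $E'$ equals $\loc^\otimes(E')$: the class $\{y \mid E' \otimes y \cong y\}$ is a localizing $\otimes$-ideal containing $E'$, whence contains $\loc^\otimes(E')$; conversely any $y$ in this class satisfies $y \cong E' \otimes y \in \loc^\otimes(E')$, where we use that $\loc^\otimes(E')$ is closed under tensoring with arbitrary objects. Applied here, it suffices to show $\loc^\otimes(F(E)) = \loc^\otimes(F\sfS)$. The inclusion $\subseteq$ is immediate since $E \in \sfS$. For the reverse, any $s \in \sfS$ satisfies $E \otimes s \cong s$, so applying $F$ gives $F(E) \otimes F(s) \cong F(s)$, placing $F(s)$ in the smashing subcategory $\loc^\otimes(F(E))$.

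For the right idempotent statement, write $F_\sfS$ for the right idempotent of $\sfS$ (renaming to avoid the notational clash with the functor $F$) and apply $F$ to the canonical triangle $E \to \unit \to F_\sfS$. Exactness and monoidality of $F$ produce a triangle $F(E) \to \unit_\sfU \to F(F_\sfS)$ in $\sfU$, and by uniqueness of cones this identifies $F(F_\sfS)$ as the right idempotent paired with $F(E)$, whose associated smashing subcategory has already been identified as $\loc^\otimes(F\sfS)$. No substantial obstacle is anticipated; the argument is a direct unwinding of the monoidality, exactness, and coproduct-preservation of $F$, and the main care required is simply in the bookkeeping of the $E$, $F$, $F_\sfS$ notation.
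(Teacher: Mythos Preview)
Your proof is correct and follows essentially the same approach as the paper: verify $F(E)$ is a left idempotent via monoidality, identify the associated smashing subcategory as $\loc^\otimes(F(E))$, and then match this with $\loc^\otimes(F\sfS)$. The one small difference is that for the inclusion $F\sfS \subseteq \loc^\otimes(F(E))$ the paper invokes the general fact (\cite{StevensonActions}*{Lemma~3.8}) that $F$ carries $\loc^\otimes(E)$ into $\loc^\otimes(F(E))$, whereas you argue directly from $E\otimes s \cong s$; your version is more self-contained and avoids the external citation, while the paper's is terser. The right-idempotent statement via the canonical triangle is the standard deduction and matches the paper's intent.
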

\begin{proof}
Applying $F$ to $\varepsilon\colon E \to \unit_\sfT$ and using the inverse of $\unit_\sfU \stackrel{\sim}{\to} F(\unit_\sfT)$ gives the structure map for $F(E)$. One checks easily that this makes $F(E)$ into a left idempotent. The associated smashing subcategory is $\loc^\otimes(F(E))$, and it remains to note that if $X\in \loc^\otimes(E)$ then $F(X) \in \loc^\otimes(F(E))$ (e.g.\ apply \cite{StevensonActions}*{Lemma~3.8}).
\end{proof}

\begin{prop}
The functor $F$ induces a morphism of frames $f\colon \mcS(\sfT) \to \mcS(\sfU)$. In particular, there is a continuous map $\Spc^\mathrm{s}(\sfU) \to \Spc^\mathrm{s}(\sfT)$. 
\end{prop}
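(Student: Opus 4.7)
The plan is to define $f\colon \mcS(\sfT)\to\mcS(\sfU)$ by $f(\sfS) = \loc^\otimes(F\sfS)$. By the preceding lemma this is a smashing ideal with left idempotent $F(E_\sfS)$, so the assignment is well-defined. Since both frames are spatial by Theorem~\ref{thm:spatial}, once $f$ is shown to be a frame morphism, Stone duality automatically produces the claimed continuous map $\Spc^\mathrm{s}(\sfU)\to\Spc^\mathrm{s}(\sfT)$.

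The bulk of the work is verifying that $f$ preserves finite meets (including the top element) and arbitrary joins (including the bottom). Preservation of meets is essentially a consequence of monoidality: by Lemma~\ref{lem:tensor} one has $\sfS\wedge\sfR = \loc^\otimes(E_\sfS\otimes E_\sfR)$, and since $F$ is monoidal, $F(E_\sfS\otimes E_\sfR)\simeq F(E_\sfS)\otimes F(E_\sfR)$, which is precisely the left idempotent of $f(\sfS)\wedge f(\sfR)$. The top is preserved because $F(\unit_\sfT)\simeq\unit_\sfU$ generates $\sfU$ as a localizing ideal, and the bottom because $F(0)=0$.

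For joins, the inclusion $\bigvee_i f(\sfS_i)\subseteq f(\bigvee_i \sfS_i)$ is immediate from $F(\sfS_i)\subseteq F(\bigvee_i \sfS_i)$. The reverse inclusion follows from the fact that $F$ being exact, coproduct-preserving, and monoidal sends any object built from $\bigcup_i \sfS_i$ by triangles, coproducts, and tensor products in $\sfT$ into $\loc^\otimes(\bigcup_i F\sfS_i) = \bigvee_i f(\sfS_i)$; this is precisely the kind of argument recorded in \cite{StevensonActions}*{Lemma~3.8} and already invoked in the proof of the preceding lemma. Applying this to the generators of $\bigvee_i \sfS_i = \loc^\otimes(\bigcup_i \sfS_i)$ shows $F(\bigvee_i \sfS_i)\subseteq \bigvee_i f(\sfS_i)$, hence $f(\bigvee_i \sfS_i)\subseteq \bigvee_i f(\sfS_i)$.

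I do not expect any serious obstacle: the main technical subtlety is only the containment $F(\bigvee_i \sfS_i)\subseteq \loc^\otimes(\bigcup_i F\sfS_i)$, which is the standard transport argument for localizing tensor ideals under a well-behaved functor. Once the frame morphism $f$ is in hand, the final sentence of the proposition is a direct appeal to Stone duality: a morphism of spatial frames contravariantly induces a continuous map of their spectra, and unwinding definitions shows that the induced map sends a prime smashing ideal $\sfQ$ of $\sfU$ to the meet-prime $\{S\in\mcS(\sfT)\mid f(S)\subseteq\sfQ\}$ of $\mcS(\sfT)$.
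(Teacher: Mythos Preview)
Your proposal is correct and follows essentially the same approach as the paper: define $f(\sfS)=\loc^\otimes(F\sfS)$, verify preservation of finite meets via monoidality of $F$ on the left idempotents, verify preservation of arbitrary joins via \cite{StevensonActions}*{Lemma~3.8}, and then invoke Stone duality for the continuous map. The paper presents the join argument as a single chain of equalities rather than two inclusions, but the content is identical.
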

\begin{proof}
The induced morphism $f$ is defined by sending $\sfS$ to $\loc^\otimes(F\sfS)$ as in the previous lemma. If $\sfS_i$ is an $I$-indexed family of smashing ideals of $\sfT$ then
\begin{align}
f (\vee_i \sfS_i) &= f \loc^\otimes(\cup_i \sfS_i) \\
&= \loc^\otimes( F\loc^\otimes(\cup_i \sfS_i)) \\
&= \loc^\otimes (F(\cup_i \sfS_i)) \\
&= \loc^\otimes(\cup_i F\sfS_i) \\
&= \loc^\otimes(\cup_i \loc^\otimes(F\sfS_i)) \\
&= \vee_i f \sfS_i
\end{align}
where the 3rd and 5th equalities are the fact that applying the coproduct preserving exact monoidal functor $F$ commutes with forming localizing subcategories as shown in \cite{StevensonActions}*{Lemma~3.8}.

If $\sfS_i$ for $i=1,2$ are smashing subcategories with associated left idempotents $E_i$ then $\sfS_1 \cap \sfS_2$ corresponds to the left idempotent $E_1 \otimes E_2$. Thus, identifying the lattices of smashing ideals and left idempotents, we have
\[
f(E_1 \wedge E_2) = f(E_1 \otimes E_2) = F(E_1 \otimes E_2) = F(E_1)\otimes F(E_2) = F(E_1) \wedge F(E_2) = f(E_1) \wedge f(E_2)
\]
and so $f$ preserves finite meets. 

Thus $f$ is a map of frames. The existence of the continuous map on the associated spectra is automatic.
\end{proof}

\begin{rem}\label{rem:map}
Let $\sfP$ be a meet-prime smashing ideal of $\sfU$. There is a corresponding point $p$ of $\mcS(\sfU)$ which sends $\sfS$ to $0$ if and only if $\sfS\subseteq \sfP$. The map $\Spec(f)$ on spaces is defined at $p$ by pushing forward this point, i.e.\
\[
\Spec(f)(p) = p\circ f.
\]
We can formulate this solely in terms of meet-primes as
\[
\Spec(f)(\sfP) = \bigvee \{\sfS \in \mcS(\sfT) \mid F(\sfS)\subseteq \sfP\}.
\]
\end{rem}

\begin{rem}
If $F\colon \sfT \to \sfU$ is an exact and coproduct preserving symmetric monoidal functor between big tt-categories it is not necessarily true that $F^{-1}$ sends smashing ideals to smashing ideals, and so one cannot describe the induced map on smashing spectra in the same way as for the Balmer spectrum.
\end{rem}

We give a concrete example of this phenomenon.

\begin{ex}
Consider the derived base change functor $\pi\colon \sfD(\ZZ) \to \sfD(\FF_p)$ for a prime $p$. Then $0$ is the unique prime smashing ideal of $\sfD(\FF_p)$. We have
\[
\pi^{-1}(0) = \Ker \pi = \loc(\{\QQ\} \cup \{\FF_q \mid q\neq p\})
\]
which is not smashing (its support is not specialization closed). The induced map
\[
\Spec (\FF_p) = \Spc^\mathrm{s}(\sfD(\FF_p)) \to \Spc^\mathrm{s}(\sfD(\ZZ)) = \Spec (\ZZ)
\]
sends $0$ to $\sfD_{\mcZ(p)}(\ZZ)$ the smashing prime corresponding to $p$; see Section~\ref{sec:ringex} for further details.
\end{ex}

\section{The big spectrum}
\setcounter{subsection}{1}

In this section we will introduce yet another space associated to a big tt-category $\sfT$. Let us begin with a very naive definition which has, perhaps because of this extreme na\"ivet\'e, not yet been considered in the literature.

\begin{defn}\label{defn:bigprime}
We call a proper localizing ideal $\sfP$ of $\sfT$ \emph{prime} (or a \emph{big prime (ideal)} to emphasize the size) if it is radical, i.e.\ if $X^{\otimes n}\in \sfP$ then $X\in \sfP$, and whenever $\sfI \cap \sfJ \subseteq \sfP$ for radical localizing ideals $\sfI$ and $\sfJ$ we have $\sfI \subseteq \sfP$ or $\sfJ \subseteq \sfP$. In other words, $\sfP$ is meet-prime with respect to radical localizing ideals.
\end{defn}

\begin{notation}
For a localizing ideal $\sfI$ we denote its radical by $\sqrt{\sfI}$, which is the smallest radical localizing ideal containing it. For localizing ideals $\sfI$ and $\sfJ$ we denote by $\sfI\otimes \sfJ$ the smallest localizing subcategory containing all objects $I\otimes J$ for $I\in\sfI$ and $J\in \sfJ$.
\end{notation}

\begin{lem}\label{lem:cap}
For radical localizing ideals $\sfI$ and $\sfJ$ we have
\[
\sfI \cap \sfJ = \sqrt{\sfI \otimes \sfJ}.
\]

\end{lem}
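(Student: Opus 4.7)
The plan is to establish the two inclusions separately; both should go through quickly, and the only conceptual ingredient is the radical property doing its job on the tensor square.

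For the inclusion $\sqrt{\sfI \otimes \sfJ} \subseteq \sfI \cap \sfJ$, I would first note that since $\sfI$ and $\sfJ$ are (localizing) ideals, each of the generating objects $I\otimes J$ with $I \in \sfI$ and $J \in \sfJ$ lies in both $\sfI$ and $\sfJ$. Passing to the smallest localizing subcategory, this yields $\sfI \otimes \sfJ \subseteq \sfI \cap \sfJ$. Next I would verify the essentially tautological fact that $\sfI \cap \sfJ$ is again radical: if $X^{\otimes n} \in \sfI \cap \sfJ$, then $X \in \sfI$ and $X \in \sfJ$ by the radicality of each, so $X \in \sfI \cap \sfJ$. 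Since $\sqrt{\sfI \otimes \sfJ}$ is by definition the smallest radical localizing ideal containing $\sfI \otimes \sfJ$, the containment follows.

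For the reverse inclusion $\sfI \cap \sfJ \subseteq \sqrt{\sfI \otimes \sfJ}$, I would argue pointwise. Given $X \in \sfI \cap \sfJ$, view $X \otimes X$ as a tensor product of an object of $\sfI$ with an object of $\sfJ$. Then $X^{\otimes 2} \in \sfI \otimes \sfJ$ directly from the definition of the latter, and hence $X \in \sqrt{\sfI \otimes \sfJ}$ by the defining property of the radical.

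There is no real obstacle here: the argument is a direct unwinding of the notations $\sqrt{-}$ and $\sfI \otimes \sfJ$, and the only mild subtlety is checking that the intersection of radical ideals is radical, which is immediate from the definition. The tensor square $X^{\otimes 2}$ does all the work and plays the role of the customary $xy$ in the commutative algebra analogue; no Zorn-style argument or recourse to primes is needed.
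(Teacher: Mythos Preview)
Your proof is correct and matches the paper's argument essentially line for line: both directions are handled exactly as you describe, with the tensor square $X^{\otimes 2}$ doing the work for the nontrivial inclusion. The only difference is cosmetic\textemdash{}you spell out why $\sfI\cap\sfJ$ is radical, whereas the paper simply asserts it.
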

\begin{proof}
Since $\sfI$ and $\sfJ$ are ideals, if $I\in \sfI$ and $J\in \sfJ$ then $I\otimes J \in \sfI\cap \sfJ$. Thus $\sfI\otimes \sfJ \subseteq \sfI \cap \sfJ$. Since $\sfI$ and $\sfJ$ are radical so is $\sfI\cap \sfJ$ and hence $\sqrt{\sfI\otimes \sfJ} \subseteq \sfI \cap \sfJ$. 

If $K \in \sfI \cap \sfJ$ then $K\otimes K \in \sfI \otimes \sfJ$ and so $K$ lies in $\sqrt{\sfI\otimes \sfJ}$, which proves the reverse containment and hence equality.
\end{proof}

\begin{lem}\label{lem:prime}
Suppose that $\sfI$ and $\sfJ$ are radical localizing ideals of $\sfT$ and $\sfP$ is a big prime. Then $\sfI \otimes \sfJ \subseteq \sfP$ implies that at least one of $\sfI$ or $\sfJ$ is contained in $\sfP$. 
\end{lem}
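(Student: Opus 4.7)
The plan is to reduce the hypothesis $\sfI \otimes \sfJ \subseteq \sfP$ to the intersection form $\sfI \cap \sfJ \subseteq \sfP$, so that the meet-prime property built into the definition of a big prime (Definition~\ref{defn:bigprime}) can be applied directly. The bridge between tensor and intersection of radical ideals has already been established in Lemma~\ref{lem:cap}.

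First I would note that $\sfP$ is by definition radical, and $\sfI \otimes \sfJ \subseteq \sfP$ therefore implies $\sqrt{\sfI \otimes \sfJ} \subseteq \sfP$ since the radical of a subideal is contained in the radical of the ambient ideal (and $\sfP = \sqrt{\sfP}$). Next, since $\sfI$ and $\sfJ$ are assumed radical, Lemma~\ref{lem:cap} identifies $\sqrt{\sfI \otimes \sfJ}$ with $\sfI \cap \sfJ$. Combining the two yields $\sfI \cap \sfJ \subseteq \sfP$.

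At this point the definition of a big prime applies: since $\sfI$ and $\sfJ$ are radical localizing ideals with $\sfI \cap \sfJ \subseteq \sfP$, we conclude $\sfI \subseteq \sfP$ or $\sfJ \subseteq \sfP$, which is the desired statement.

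There is no real obstacle; all the work is absorbed in Lemma~\ref{lem:cap}, which handles the subtlety that while $\sfI \otimes \sfJ$ is the natural ``ideal-theoretic product'' one might want to use, the meet-prime property is phrased in terms of $\cap$. The only thing to be careful about is invoking the radicality of both $\sfI$ and $\sfJ$ (needed for Lemma~\ref{lem:cap}) and of $\sfP$ (needed to pass from $\sfI \otimes \sfJ \subseteq \sfP$ to its radical).
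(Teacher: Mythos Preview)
Your proof is correct and follows exactly the same route as the paper: use that $\sfP$ is radical to pass from $\sfI\otimes\sfJ\subseteq\sfP$ to $\sqrt{\sfI\otimes\sfJ}\subseteq\sfP$, invoke Lemma~\ref{lem:cap} to rewrite this as $\sfI\cap\sfJ\subseteq\sfP$, and then apply the meet-prime condition in Definition~\ref{defn:bigprime}.
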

\begin{proof}
Since $\sfP$ is prime and hence radical by definition we have that $\sfI\otimes \sfJ \subseteq \sfP$ if and only if $\sqrt{\sfI\otimes \sfJ}$ is contained in $\sfP$. Thus if $\sfI\otimes \sfJ$ is contained in $\sfP$ so is its radical, which is $\sfI\cap \sfJ$ by Lemma~\ref{lem:cap}. The conclusion then follows from primeness of $\sfP$. 
\end{proof}

\begin{rem}
Ideally this would, as in commutative algebra and the tt-geometry of small tt-categories, be equivalent to the statement that $X\otimes Y\in \sfP$ if and only if at least one of $X$ or $Y$ lies in $\sfP$. The issue is that the presence of infinite coproducts makes taking the radical a much more complicated process (cf.\ the example that follows).
\end{rem}

\begin{ex}
Let $k$ be a field, let $n_i$ be a sequence of natural numbers with each $n_i \geq 2$, and consider the truncated polynomial ring 
\[
\Lambda = k[x_i \mid i\in \NN]/(x_i^{n_i} \mid i\in \NN).
\]
By \cite{DP08}*{Theorem~B} for any $n\geq 1$ there exists an object $X_n \in \sfD(\Lambda)$ such that $X_n^{\otimes n}\not\cong 0$ but $X_n^{\otimes n+1} \cong 0$. It follows that
\[
\sfN = \{X\in \sfD(\Lambda) \mid X^{\otimes i} \cong 0 \text{ for some } i\in \NN\}
\]
is not a localizing ideal. It is closed under suspensions, summands, and cones but not coproducts: the object $\coprod_n X_n$ is not tensor nilpotent and so does not lie in $\sfN$.

One can of course close under coproducts, adding such sums of nilpotents, but then there is no obvious reason for the result to be closed under cones. Once we add cones there is no obvious reason for the result to be radical. We continue in this fashion and the process does not obviously ever terminate (even proceeding transfinitely and taking unions at limit ordinals, as these unions might not be closed under coproducts).
\end{ex}

\begin{lem}\label{lem:radical}
Suppose that every localizing ideal of $\sfT$ is radical. Then $\sfP$ is a big prime if and only if whenever $X\otimes Y \in \sfP$ one of $X$ or $Y$ lies in $\sfP$.
\end{lem}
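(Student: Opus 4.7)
The plan is to prove the two implications separately, with the backward direction being essentially formal and the forward direction requiring a short technical lemma.

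For the backward direction, I will assume that $X\otimes Y\in \sfP$ forces $X\in \sfP$ or $Y\in \sfP$ and verify that $\sfP$ is a big prime in the sense of Definition~\ref{defn:bigprime}. Radicality of $\sfP$ is immediate from the standing hypothesis. For meet-primeness, I argue contrapositively: if $\sfI,\sfJ$ are radical localizing ideals with $\sfI\cap \sfJ\subseteq \sfP$ yet $\sfI\not\subseteq \sfP$ and $\sfJ\not\subseteq \sfP$, pick $X\in \sfI\setminus \sfP$ and $Y\in \sfJ\setminus \sfP$; since $\sfI$ and $\sfJ$ are $\otimes$-ideals, $X\otimes Y\in \sfI\cap \sfJ\subseteq \sfP$, contradicting the assumption.

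For the forward direction, suppose $\sfP$ is a big prime and $X\otimes Y\in \sfP$. Set $\sfI=\loc^\otimes(X)$ and $\sfJ=\loc^\otimes(Y)$, which are radical by hypothesis. By Lemma~\ref{lem:prime} it is enough to establish that $\sfI\otimes \sfJ\subseteq \sfP$, and I will actually show the stronger inclusion $\sfI\otimes \sfJ\subseteq \loc^\otimes(X\otimes Y)$. This proceeds by the standard ``fix one variable at a time'' trick: let
\[
\mcA = \{J\in \sfT \mid X\otimes J\in \loc^\otimes(X\otimes Y)\}.
\]
Using that tensoring with a fixed object preserves triangles and coproducts, and that for any $T\in \sfT$ we have $T\otimes(X\otimes J) = X\otimes(T\otimes J)$, one checks that $\mcA$ is a localizing $\otimes$-ideal (the $\otimes$-ideal property of $\mcA$ is inherited from that of $\loc^\otimes(X\otimes Y)$). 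Since $Y\in \mcA$ we get $\sfJ\subseteq \mcA$. Running the same argument on
\[
\mcB = \{I\in \sfT \mid I\otimes J\in \loc^\otimes(X\otimes Y) \text{ for every } J\in \sfJ\},
\]
one obtains that $\mcB$ is a localizing $\otimes$-ideal containing $X$, hence containing $\sfI$. Thus $I\otimes J\in \loc^\otimes(X\otimes Y)\subseteq \sfP$ for all $I\in \sfI$ and $J\in \sfJ$, giving $\sfI\otimes \sfJ\subseteq \sfP$ as required, and Lemma~\ref{lem:prime} then yields $X\in \sfP$ or $Y\in \sfP$.

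The main (and only) obstacle is the bookkeeping in verifying that $\mcA$ and $\mcB$ are genuinely localizing $\otimes$-ideals; once the ideal property is in place, the rest of the argument is a direct appeal to Lemma~\ref{lem:prime}. No further use of the blanket hypothesis is needed beyond ensuring $\sfI$ and $\sfJ$ are radical, so that Lemma~\ref{lem:prime} applies.
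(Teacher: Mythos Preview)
Your argument is correct and follows essentially the same route as the paper. The only cosmetic difference is that in the forward direction the paper invokes \cite{StevensonActions}*{Lemma~3.11} for the equality $\loc^\otimes(X)\otimes\loc^\otimes(Y)=\loc^\otimes(X\otimes Y)$ and then applies the definition of big prime directly (via Lemma~\ref{lem:cap}), whereas you reprove the needed containment by the two-variable trick and then appeal to Lemma~\ref{lem:prime}; these are the same argument packaged differently.
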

\begin{proof}
Suppose that $\sfP$ is a big prime and $X\otimes Y \in \sfP$. Then, using Lemma~\ref{lem:cap} and the fact that radicals are free, we see
\[
\loc^\otimes(X) \cap \loc^\otimes(Y) = \loc^\otimes(X) \otimes \loc^\otimes(Y) = \loc^\otimes(X\otimes Y) \subseteq \sfP
\]
(the second equality via \cite{StevensonActions}*{Lemma~3.11}) and so $X\in \sfP$ or $Y\in \sfP$. 

On the other hand, suppose that $\sfP$ satisfies the objectwise condition. It is automatically radical by assumption. Let us be given $\sfI \cap \sfJ \subseteq \sfP$ for localizing ideals $\sfI$ and $\sfJ$ and say $\sfI \nsubseteq \sfP$. Pick an $X\in \sfI$ with $X\notin \sfP$. Then for $Y\in \sfJ$ we have
\[
X\otimes Y \in \sfI \cap \sfJ \subseteq \sfP
\]
and so, since $X\notin \sfP$, we have $Y\in \sfP$. This shows $\sfJ \subseteq \sfP$ as required.
\end{proof}

\begin{rem}
The assumption that every localizing ideal is radical is quite strong. For instance, it rules out the presence of nilpotent objects and these exist in cases of great interest e.g.\ the stable homotopy category. However, there are examples: if $\sfT$ is a big tt-category such that the assignment sending a subset of the spectrum (either $\Spc^\mathrm{s} (\sfT)$ or $\Spc(\sfT^c)$) to the ideal of objects supported on that subset is a bijection then every localizing ideal is radical (simply because $\Supp X^{\otimes n} \subseteq \Supp X$). This is known for several classes of examples, the prototypical one being $\sfD(R)$ for a commutative noetherian ring $R$ \cite{NeeChro}.
\end{rem}

\begin{defn}\label{defn:bigspc}
We denote by $\SPC (\sfT)$ the collection of big prime ideals of $\sfT$, and call it the \emph{big spectrum}. We define the \emph{big support} of an object $X\in \sfT$ by
\[
\SUPP (X) = \{\sfP \in \SPC (\sfT) \mid X\notin \sfP\}
\]
and `topologize' $\SPC (\sfT)$ (see Remark \ref{rem:smaug}) by declaring these subsets to be closed and taking the topology they generate.
\end{defn}

\begin{rem}\label{rem:smaug}
There be dragons here. We do not know that $\SPC (\sfT)$ forms a set in general, and so it is potentially an abuse of the usual terminology to call it a space.
\end{rem}

\begin{rem}\label{rem:topologies}
One could equally well topologize $\SPC (\sfT)$ by only looking at the big support of $\alpha$-compact objects for some regular cardinal $\alpha$. This gives a family of topologies, which become increasingly fine, that seem worthy of investigation. A particular case of interest, which will occur in the sequel, is to take $\alpha = \aleph_0$.
\end{rem}

The big support satisfies the properties one would expect and, at least when one doesn't need to deal with radicals, we also get the tensor product property.

\begin{lem}
The big support satisfies the following properties:
\begin{itemize}
\item[(1)] $\SUPP(0) = \varnothing$ and $\SUPP(\unit) = \SPC (\sfT)$;
\item[(2)] $\SUPP (\coprod_i X_i) = \cup_i \SUPP (X_i)$;
\item[(3)] $\SUPP (\Sigma X) = \SUPP (X)$;
\item[(4)] for any triangle $X \to Y \to Z$ we have $\SUPP (Y) \subseteq \SUPP (X) \cup \SUPP (Z)$;
\item[(5)] $\SUPP (X\otimes Y) \subseteq \SUPP (X)\cap \SUPP (Y)$;
\item[(6)] if every localizing ideal is radical then $\SUPP (X \otimes Y) = \SUPP (X)\cap \SUPP (Y)$.
\end{itemize}
\end{lem}
\begin{proof}
The usual results with the usual proof; property (6) follows immediately from Lemma~\ref{lem:radical}.
\end{proof}



\section{Comparison maps}\label{sec:comparison}

There is some work to do in order to situate $\Spc^\mathrm{s}(\sfT)$ and $\SPC (\sfT)$ in the landscape: we need to describe their relation to $\Spc (\sfT^c)$, the premier, and to the plucky newcomer $\Spc^\mathrm{h}(\sfT^c)$, the homological spectrum of \cite{BaHomological}, both of which we recalled in Section~\ref{sec:prelim}. 

Our interest in our new toys is the hope that they can be used to classify (more of the) localizing tensor ideals in situations where the telescope conjecture fails or where $\sfT$ is not `sufficiently noetherian' (whatever that might mean). We already know that $\Spc$ and $\Spc^\mathrm{h}$ are not up to this task. Nonetheless there are canonical morphisms comparing these spaces and it is instructive, and computationally beneficial, to explore them.

\subsection{The smashing spectrum}

Suppose that $\sfK^c$ is a thick ideal of $\sfT^c$. Then we may inflate it to $\sfK = \loc(\sfK^c)$. By results of Miller and Neeman the localizing subcategory $\sfK$ is a tensor ideal and satisfies $\sfK \cap \sfT^c = \sfK^c$, justifying the notation (a convenient reference for these facts is \cite{BaRickard}*{Theorem~4.1}). On the other hand, given a smashing ideal $\sfS$ of $\sfT$ we may form $\sfS^c = \sfS\cap \sfT^c$ and this is a tensor ideal of $\sfT^c$.

Let us denote the lattice of thick tensor ideals of $\sfT^c$ by $\mcT(\sfT^c)$, and we remind the reader that $\mcS(\sfT)$ is the lattice of smashing tensor ideals. Inflation gives a map of posets $f\colon \mcT(\sfT^c) \to \mcS(\sfT)$ and intersection with compacts gives a poset map $g\colon \mcS(\sfT) \to \mcT(\sfT^c)$.

\begin{lem}\label{lem:inflation}
The poset map $f$ is a morphism of frames with right adjoint $g$ and $gf = 1$.
\end{lem}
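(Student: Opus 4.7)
The plan is to verify the three claims in turn, leaning on Miller--Neeman for $gf = \mathrm{id}$, on the universal property of localizing subcategories for the adjunction, and on Lemma~\ref{lem:tensor} for preservation of finite meets.

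First I would check that $f$ is well-defined. Given $\sfK^c \in \mcT(\sfT^c)$, the subcategory $\loc(\sfK^c)$ is compactly generated by the objects of $\sfK^c$, which are themselves rigid, so it is a smashing tensor ideal of $\sfT$. The identity $gf(\sfK^c) = \loc(\sfK^c) \cap \sfT^c = \sfK^c$ is then exactly the Miller--Neeman theorem cited immediately before the lemma (\cite{BaRickard}*{Theorem~4.1}), giving $gf = 1$.

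Next I would verify the adjunction $f \dashv g$ at the poset level. For $\sfK^c \in \mcT(\sfT^c)$ and $\sfS \in \mcS(\sfT)$, if $f(\sfK^c) = \loc(\sfK^c) \subseteq \sfS$ then certainly $\sfK^c \subseteq \loc(\sfK^c) \cap \sfT^c \subseteq \sfS \cap \sfT^c = g(\sfS)$. Conversely, if $\sfK^c \subseteq g(\sfS) \subseteq \sfS$, then because $\sfS$ is localizing we have $\loc(\sfK^c) \subseteq \sfS$, i.e.\ $f(\sfK^c) \subseteq \sfS$. So $f$ is left adjoint to $g$ and in particular preserves arbitrary joins automatically.

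It remains to check that $f$ preserves finite meets. For the top element, $f(\sfT^c) = \loc(\sfT^c) = \sfT$ because $\sfT^c$ generates $\sfT$ as a localizing category by the standing hypothesis. For binary meets, given $\sfI^c, \sfJ^c \in \mcT(\sfT^c)$, I would apply Lemma~\ref{lem:tensor} to the smashing ideals $\loc(\sfI^c)$ and $\loc(\sfJ^c)$, which yields
\[
f(\sfI^c) \cap f(\sfJ^c) = \loc(\sfI^c) \cap \loc(\sfJ^c) = \loc(\sfI^c \cap \sfJ^c) = f(\sfI^c \cap \sfJ^c).
\]
This exhibits $f$ as a morphism of frames.

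I do not anticipate a serious obstacle: the only nontrivial input is Lemma~\ref{lem:tensor}, which is already available, and the Miller--Neeman theorem. The mild subtlety worth flagging is that $\mcS(\sfT)$ has joins computed as localizing-ideal closure of unions (not as unions), so it is important that the verification of join-preservation is deferred to the general fact that left adjoints preserve joins, rather than attempted by hand.
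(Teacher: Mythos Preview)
Your proof is correct and follows essentially the same route as the paper's: Miller--Neeman for $gf=1$, the evident poset adjunction, and the identity $\loc(\sfI^c)\cap\loc(\sfJ^c)=\loc(\sfI^c\cap\sfJ^c)$ for binary meets. The only cosmetic differences are that you deduce join-preservation from the adjunction whereas the paper asserts it ``by construction'', and you invoke Lemma~\ref{lem:tensor} for the meet identity whereas the paper reproves it inline via \cite{StevensonActions}*{Lemma~3.11}; neither is a substantive deviation.
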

\begin{proof}
By construction $f$ preserves arbitrary joins. Suppose that $\sfK_i^c$ for $i=1,2$ are thick ideals of compacts. Then $f(\sfK_1^c \cap \sfK_2^c)$ is clearly contained in $f(\sfK_1^c) \cap f(\sfK_2^c)$. For the reverse containment we note that, letting $E_i$ denote the respective left idempotents, $E_i \in \loc(\sfK_i^c)$ implies that $E_1\otimes E_2$, the idempotent for $f(\sfK_1^c) \cap f(\sfK_2^c)$, lies in
\[
\loc(\sfK_1^c) \otimes \loc(\sfK_2^c) = \loc(\sfK_1^c \otimes \sfK_2^c)
\]
where this equality is \cite{StevensonActions}*{Lemma~3.11}. Thus $E_1\otimes E_2$, and hence $f(\sfK_1^c) \cap f(\sfK_2^c)$, is contained in $f(\sfK_1^c \cap \sfK_2^c)$.

We have noted above that, by work of Neeman, $\loc(\sfK^c) \cap \sfT^c = \sfK^c$ i.e.\ $gf=1$. Finally, suppose that $\sfK^c \in \mcT(\sfT^c)$ and $\sfS \in \mcS(\sfT)$. Then $f(\sfK^c) \subseteq \sfS$ if and only if $\sfK^c \subseteq \sfS$ if and only if $\sfK^c \subseteq \sfS^c = g(\sfS)$, which proves that $f\dashv g$.
\end{proof}

Recall that for a spectral space $X$ we denote by $X^\vee$ the Hochster dual of $X$. This is the space with the same points and whose open subsets are generated by the closed subsets of $X$ with quasi-compact open complement.

\begin{thm}\label{thm:comparison}
Suppose that $\mcS(\sfT)$ is spatial. There is a continuous map $\psi\colon \Spc^\mathrm{s}(\sfT) \to (\Spc (\sfT^c))^\vee$ given by $\psi(\sfP) = \sfP^c$ such that 
\[
\psi^{-1}\Supp (t) = \sSupp (t)
\]
for all $t\in \sfT^c$.
\end{thm}
\begin{proof}
The map $\psi$ is given by $\Spec(f)$, i.e.\ by Stone duality, and is automatically continuous. An exercise in unwinding this construction shows that it sends a smashing prime $\sfP$ to $\psi(\sfP) = g(\sfP) = \sfP^c$. 

Suppose that $t$ is compact. By definition, $\sfP \in \sSupp (t)$ means that $t\notin \sfP$, i.e.\ $t\notin \sfP^c$. This says precisely that $t\notin \psi(\sfP)$. We have shown that
\[
\psi^{-1}\Supp (t) = \{\sfP \mid t\notin \psi(\sfP)\} = \{\sfP \mid t\notin \sfP^c\} = \{\sfP \mid t\notin \sfP\} = \sSupp (t)
\]
as claimed.
\end{proof}

\begin{rem}
It is not in general true that $\psi$ is compatible with the small support of arbitrary objects, see Remark~\ref{rem:ssupppreimage}.
\end{rem}

\begin{rem}
Even if $\mcS(\sfT)$ were not spatial one can perform the same construction to produce a map as in the theorem.
\end{rem}

\begin{cor}\label{cor:telescope}
Suppose that $\mcS(\sfT)$ is spatial. The comparison map $\psi$ is a homeomorphism if and only if the telescope conjecture for smashing ideals holds, i.e.\ if every smashing ideal is generated by compact objects. In this case the Balmer--Favi and small smashing supports coincide (at points where they are defined).
\end{cor}
\begin{proof}
As Stone duality is an equivalence of categories the map $\psi$ is a homeomorphism if and only if the maps $f$ and $g$ of Lemma~\ref{lem:inflation} are inverse lattice isomorphisms between $\mcS(\sfT)$ and $\mcT(\sfT^c)$. This is the case precisely if the telescope conjecture holds.

In this situation it follows that, when they can be defined, the idempotents yielding the two notions of support coincide. For convenience of notation let us treat $\psi$ as an identification. Given $\mfp \in \Spc (\sfT^c)$ its closure $\mcV(\mfp)$ is open in $\Spc^\mathrm{s}(\sfT)$ and $\sfP$, the corresponding smashing prime, is maximal in it (remember if $\mfp \subseteq \mfq$ then $\Supp(\mfp) \supseteq \Supp(\mfq)$). Thus we may compute $\Gamma_\sfP$ as 
\[
\Gamma_\sfP = E_{\sfT_{\mcV(\mfp)}} \otimes F_\sfP = \Gamma_{\mcV(\mfp)}\unit \otimes L_{\Supp(\mfp)}\unit = \Gamma_\mfp \unit.
\]
\end{proof}

\begin{rem}\label{rem:fail}
We learn from the Corollary that there are cases in which the smashing spectrum is insufficient for the purposes of understanding localizing ideals. For instance, it is shown in \cite{BazzoniStovicek} that the telescope conjecture holds for $R = \prod_\NN k$, where $k$ is a field, and so $\psi$ is a homeomorphism. But, as shown in \cite{StevensonAbsFlat} there are exotic localizing ideals, i.e.\ ideals which are not determined by their support.
\end{rem}

\begin{rem}
Of course if the telescope conjecture holds then $\mcS(\sfT)$ is spatial. On the other hand, one has to consider the possibility that $\mcS(\sfT)$ is not spatial, and the telescope conjecture fails, but the comparison map is a homeomorphism. Such an example would, needless to say, be very informative.
\end{rem}

%

%
%

\subsection{Primes, primes, and primes}

Let us now discuss how to compare the remaining notions of prime ideal, i.e.\ smashing, big, and homological.

Given $\mcB \in \Spc^\mathrm{h}(\sfT^c)$, with associated pure injective $I_\mcB$ (we change the usual notation slightly to avoid conflict with left idempotents), Balmer tells us in \cite{BaHomological}*{Theorem~3.11} that
\[
\mcB' = \Ker [-, I_\mcB] = \{M\in \Modu \sfT^c \mid [M,I_\mcB] = 0\},
\]
where $[-,-]$ denotes the internal hom in $\Modu \sfT^c$, is the unique maximal localizing Serre ideal containing $\mcB$.

\begin{lem}\label{lem:homologicaltobig}
Suppose that $\mcB \in \Spc^\mathrm{h}(\sfT^c)$ and let $\mcB'$ be the unique maximal localizing Serre ideal containing $\mcB$. Then $h^{-1}\mcB'$ is a big prime in $\sfT$.
\end{lem}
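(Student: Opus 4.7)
The plan is to transport the $\otimes$-primeness of $\mcB'$ through the monoidal functor $h$ to conclude that $\sfP := h^{-1}\mcB'$ is a big prime of $\sfT$. First I would check that $\sfP$ is a proper localizing tensor ideal. Since $h$ is a conservative, coproduct-preserving, monoidal, homological functor and $\mcB'$ is a localizing Serre $\otimes$-ideal of $\Modu \sfT^c$, its preimage is automatically closed under triangles (using the long exact sequences coming from $h$ together with the Serre closure of $\mcB'$), under coproducts, and under tensoring with arbitrary objects of $\sfT$. Properness follows from $h(\unit) = \hat{\unit} \notin \mcB'$, since $[\hat{\unit}, I_\mcB] = I_\mcB \neq 0$.

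The heart of the argument is to show that $\mcB'$ is $\otimes$-prime in $\Modu \sfT^c$: if $M \otimes N \in \mcB'$ then $M \in \mcB'$ or $N \in \mcB'$. The strategy is to leverage the \emph{uniqueness} in the maximality statement for $\mcB'$. If neither $M$ nor $N$ lies in $\mcB'$, let $\langle M \rangle$ and $\langle N \rangle$ denote the localizing Serre tensor ideals of $\Modu \sfT^c$ generated by $M$ and $N$; then $\mcB' \vee \langle M \rangle$ and $\mcB' \vee \langle N \rangle$ both strictly contain $\mcB'$ while still containing $\mcB$, hence by maximality neither can be proper, so both coincide with all of $\Modu \sfT^c$. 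Passing to the Gabriel quotient $\mcQ := \Modu \sfT^c / \mcB'$, the maximality translates to the statement that every nonzero localizing Serre tensor ideal of $\mcQ$ is the whole category; in particular $\hat{\unit}$ lies in both $\langle \overline{M} \rangle$ and $\langle \overline{N} \rangle$, where $\overline{(-)}$ denotes images in $\mcQ$. Combining this with $\overline{M} \otimes \overline{N} = 0$ should produce the absurdity $\hat{\unit} = \hat{\unit} \otimes \hat{\unit} = 0$ in $\mcQ$, contradicting properness of $\mcB'$.

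With $\otimes$-primeness of $\mcB'$ in hand, the three big-prime conditions for $\sfP$ follow easily. Radicality: if $X^{\otimes n} \in \sfP$ then $h(X)^{\otimes n} \in \mcB'$, and iterating $\otimes$-primeness gives $h(X) \in \mcB'$, hence $X \in \sfP$. Meet-primeness over radical localizing ideals: given $\sfI \cap \sfJ \subseteq \sfP$ with putative witnesses $X \in \sfI \setminus \sfP$ and $Y \in \sfJ \setminus \sfP$, the fact that $\sfI$ and $\sfJ$ are ideals gives $X \otimes Y \in \sfI \cap \sfJ \subseteq \sfP$, whence $h(X) \otimes h(Y) \in \mcB'$ with neither factor in $\mcB'$, contradicting the $\otimes$-primeness of $\mcB'$.

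The delicate point is justifying $\hat{\unit} \otimes \hat{\unit} = 0$ in the quotient argument. The candidate subcategory $\{X \in \mcQ : X \otimes \overline{N} = 0\}$ is closed under quotients, extensions, coproducts, and tensoring with arbitrary objects, but only right-exactness of the tensor product obstructs closure under subobjects, so it is nontrivial to see that $\langle \overline{N} \rangle$ is contained in it. A cleaner path is probably to exploit the explicit description $\mcB' = \ker[-, I_\mcB]$ from \cite{BaHomological} and indecomposability of the pure injective $I_\mcB$, which should give $\otimes$-primeness of $\mcB'$ more directly.
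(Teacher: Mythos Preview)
Your overall strategy is right, and you have correctly identified the sticking point: the subcategory $\{X \in \mcQ : X \otimes \overline{N} = 0\}$ need not be closed under subobjects when $\overline{N}$ is an arbitrary module, so there is no reason the localizing Serre ideal generated by $\overline{M}$ should land inside it. This is a genuine gap, and your suggested fallback via $\Ker[-,I_\mcB]$ is not worked out (and it is not clear it helps, since the internal hom has the same exactness issues on the other side).

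The fix, which is exactly what the paper does, is to observe that you never need $\mcB'$ to be $\otimes$-prime on all of $\Modu \sfT^c$: you only apply it to objects of the form $hX$ for $X \in \sfT$, and these are \emph{flat}. If $X \in \sfI \setminus \sfP$ then $\overline{h}X$ is nonzero and flat in $\mcQ = \Modu \sfT^c / \mcB'$, so tensoring with it is exact and $\Ker(- \otimes \overline{h}X)$ is a genuine localizing Serre ideal of $\mcQ$. It is proper (it misses $\hat{\unit}$), hence by maximality of $\mcB'$ it is zero, i.e.\ $\overline{h}X \otimes (-)$ kills nothing nonzero. For any $Y \in \sfJ$ we have $\overline{h}X \otimes \overline{h}Y \cong \overline{h}(X \otimes Y) = 0$, forcing $\overline{h}Y = 0$ and thus $Y \in \sfP$. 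The same flatness argument (with $X=Y$) handles radicality; the paper instead cites the tensor-product formula for homological support \cite{BaHomological}*{Theorem~4.5}, which is an equally clean route. Once you insert the flatness of $hX$, your outline goes through.
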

\begin{proof}
First of all, since $\mcB'$ is a localizing subcategory of $\Modu \sfT^c$ closed under the induced suspension, it follows from the fact that $h$ is cohomological, coproduct preserving, and compatible with suspension that $h^{-1}\mcB'$ is localizing in $\sfT$. Similarly, since $h$ is monoidal it follows that $h^{-1}\mcB'$ is an ideal.

Next let us check that $\sfP = h^{-1}\mcB'$ is radical. Suppose $X^{\otimes n} \in \sfP$, i.e.\ $hX^{\otimes n} \in \mcB'$. This is the same as saying $\mcB \notin \hsupp (X^{\otimes n})$. By the tensor product formula \cite{BaHomological}*{Theorem~4.5} the homological supports of $X^{\otimes n}$ and $X$ agree, so $\mcB$ is not in the support of $X$, i.e.\ $hX\in \mcB'$ and so $X\in \sfP$.

Finally, we must check $\sfP$ satisfies the primeness condition. To this end let $\sfI$ and $\sfJ$ be radical localizing ideals with $\sfI \otimes \sfJ \subseteq \sfP$ (we can work with this condition by Lemma~\ref{lem:cap}). Suppose that $\sfI \nsubseteq \sfP$ and pick some $X$ witnessing this fact. Thus $hX\notin \mcB'$ and so $\overline{h}X$, its image in $\Modu \sfT^c/\mcB'$ is also non-zero. The object $\overline{h}X$ is flat and so $\Ker(-\otimes \overline{h}X)$ is a proper localizing Serre ideal of $\Modu \sfT^c/\mcB'$. By maximality of $\mcB'$ it must thus be trivial, i.e.\ $\overline{h}X \otimes(-)$ kills no non-zero object. But for every $Y\in \sfJ$ we know that $hX \otimes hY \cong h(X\otimes Y) \in \mcB'$ and so $\overline{h}X \otimes \overline{h}Y \cong 0$ in the quotient. Thus $\overline{h}Y \cong 0$ i.e.\ $hY \in \mcB'$ and so $Y\in \sfP$. Hence $\sfP$ is a big prime.
\end{proof}

\begin{prop}\label{prop:homologicaltobig}
The assignment $\mcB \mapsto \mcB' \mapsto h^{-1}\mcB'$ defines a comparison map
\[
\chi \colon \Spc^\mathrm{h}(\sfT^c) \to \SPC (\sfT) \text{ such that } \chi^{-1}\SUPP (X) = \hsupp (X)
\]
for every object $X$ of $\sfT$. In particular, $\chi$ is continuous when the big spectrum is topologized via the supports of compact objects.
\end{prop}
\begin{proof}
The map $\chi$ is well defined by Lemma~\ref{lem:homologicaltobig}. 
For an object $X$ we have
\begin{align*}
\chi^{-1} \SUPP (X) &= \chi^{-1} \{\sfP \in \SPC (\sfT) \mid X\notin \sfP\} \\
&= \{\mcB \mid X\notin h^{-1}\mcB'\} \\
&= \{\mcB \mid h(X) \notin \mcB'\} \\
&= \{\mcB \mid [X, I_\mcB] \neq 0\} \\
&= \hsupp(X)
\end{align*}
where the last equalities are from \cite{BaHomological}*{Definition~4.1 and Proposition~4.2}. It follows that $\chi$ is continuous as the $\SUPP (x)$ are the basic closed subsets for the chosen topology on the big spectrum.
\end{proof}

\begin{rem}\label{rem:speculation1}
We lack the examples to predict how this map might behave in general. We would hazard the (very speculative and particularly baseless) guess that there should be a comparison map in the other direction (of which $\chi$ would like to be a section\textemdash{}see Proposition~\ref{prop:bigtosmall} for a vague step in this direction).
\end{rem}

What we have shown is also enough to guarantee at least as many big primes as small ones.

\begin{thm}\label{thm:enoughbigprimes}
Given $\mathbf{p} \in \Spc (\sfT^c)$ there exists a big prime $\sfP$ such that $\mathbf{p} = \sfP^c$. 
\end{thm}
\begin{proof}
By \cite{BaNilpotence}*{Corollary~3.9} there is a homological prime $\mcB$ in $\modu \sfT^c$ such that $\mathbf{p} = h^{-1}\mcB$. Passing to the unique localizing Serre ideal $\mcB'$ containing $\mcB$ we get a big prime $\sfP = h^{-1}\mcB'$ by Lemma~\ref{lem:homologicaltobig}. By \cite{BaHomological}*{Theorem~3.11} $\mcB' \cap \modu \sfT^c = \mcB$ and so
\[
\sfP^c = h^{-1}\mcB' \cap \sfT^c = h^{-1}\mcB' \cap h^{-1} (\modu \sfT^c) = h^{-1}(\mcB' \cap \modu \sfT^c) = h^{-1}\mcB = \mathbf{p}.
\]
\end{proof}

\begin{prop}\label{prop:bigtosmall}
Suppose that every localizing ideal of $\sfT$ is radical and let $\sfP \in \SPC (\sfT)$. Then $\sfP^c = \sfP \cap \sfT^c$ lies in $\Spc (\sfT^c)$. In particular there is a comparison map (of sets) $\omega\colon \SPC (\sfT) \to \Spc (\sfT^c)$ fitting into a commutative triangle
\[
\xymatrix{
\Spc^\mathrm{h}(\sfT^c) \ar[rr]^-\phi \ar[dr]_-\chi && \Spc (\sfT^c) \\
& \SPC (\sfT) \ar[ur]_-\omega &
}
\]
\end{prop}
\begin{proof}
By Lemma~\ref{lem:radical} a localizing ideal $\sfP$ is a big prime if and only if $X\otimes Y \in \sfP$ implies at least one of $X$ or $Y$ lies in $\sfP$. Thus if $a$ and $b$ are compact objects of $\sfT$ such that $a\otimes b\in \sfP^c$ we must have, without loss of generality, $a\in \sfP$. Since $a$ is compact it lies in $\sfP^c$ and so $\sfP^c$ is a prime thick tensor ideal in the sense of Balmer. 

To see the triangle commutes we just compute that, for a homological prime $\mcB$ contained in its unique maximal localizing Serre ideal $\mcB'$, we have
\[
\phi(\mcB) = h^{-1}\mcB = h^{-1}\mcB' \cap \sfT^c = \omega(h^{-1}\mcB') = \omega\chi \mcB
\]
analogously to the computation in Theorem~\ref{thm:enoughbigprimes}.
\end{proof}



\section{An example: commutative noetherian rings}\label{sec:ringex}
\setcounter{subsection}{1}

In this section we discuss, in an example we understand somewhat well, the behaviour of the objects we have defined.

\begin{rem}
We stick here to the derived category of a ring, however many parts easily generalize to any $\sfT$ when we know the classification of localizing ideals in terms of $\Spc(\sfT^c)$. By results of \cite{BarthelStrat}, we have such a classification of localizing tensor ideals when $\sfT$ satisfies the local-to-global principle, and each $\Gamma_\sfP \sfT$ is a minimal localizing ideal. 
\end{rem}


Throughout we fix $R$ a commutative noetherian ring. All functors are taken to be derived, and equality sometimes means isomorphism. Let us begin by reminding ourselves of some details to fix notation.

Let $\mfp$ be a prime ideal of $R$. We set, as usual, $\mcV(\mfp) = \overline{\{\mfp\}}$ and 
\begin{displaymath}
\mcZ(\mfp) = \{\mfq \in \Spec (R) \mid \mfp \notin \mcV(\mfq)\} = \Spec(R) - \Spec(R_\mfp).
\end{displaymath}
There is a corresponding prime ideal in $\sfD^\mathrm{perf}(R)$, namely
\begin{displaymath}
\sfD^\mathrm{perf}_{\mcZ(\mfp)}(R) = \{E\in \sfD^\mathrm{perf}(R) \mid \supp (E) \subseteq \mcZ(\mfp)\} = \{E\in \sfD^\mathrm{perf}(R)\mid E_\mfp = 0\}.
\end{displaymath}
This gives an inclusion reversing bijection between $\Spec (R)$ and $\Spc (\sfD^\mathrm{perf}(R))$.

We know, by the work of Neeman \cite{NeeChro}, that localizing ideals of $\sfD(R)$ are in bijection with subsets of $\Spec (R)$ and that the telescope conjecture holds.

By \cite{BaNilpotence}*{Corollary~5.11} the comparison map $\Spc^\mathrm{h}(\sfD^\mathrm{perf}(R)) \to \Spc (\sfD^\mathrm{perf}(R))$ is a bijection, i.e.\ there is a unique homological prime associated to each prime ideal of $R$. The homological prime associated to $\mfp$ is given by the kernel of the functor
\[
\modu \sfD^\mathrm{perf}(R) \to \modu \sfD^\mathrm{perf}(k(\mfp))
\]
induced by $R\to k(\mfp)$. 

Now let us describe the big and smashing primes for $\sfD(R)$ and then make a systematic comparison.


\subsection{Big primes}\label{ssec:bp}

We know that there are only a set of localizing subcategories in $\sfD(R)$, that every localizing subcategory is an ideal, and that every ideal is radical. So we can comfortably look at prime localizing ideals and by Lemma~\ref{lem:radical} we can define these as either meet-prime localizing subcategories or as objectwise prime with respect to the tensor product.

\begin{lem}
Let $\sfP$ be a prime localizing ideal of $\sfD(R)$. Then there is a unique $\mfp\in \Spec (R)$ such that $k(\mfp)\notin \sfP$, and so it follows from the classification that
\begin{displaymath}
\sfP = \loc(k(\mfq) \mid \mfq \neq \mfp).
\end{displaymath}
In particular, the primes are the maximal proper localizing subcategories. This sets up a bijection $\Spec (R) \cong \SPC (\sfD(R))$.
\end{lem}
\begin{proof}
If $\mfp \neq \mfq$ then $k(\mfp)\otimes k(\mfq) = 0$ and so at least one of $k(\mfp)$ or $k(\mfq)$ lies in $\sfP$. In other words there is at most one $\mfp$ such that $k(\mfp)\notin \sfP$. We know the residue fields generate $\sfD(R)$ so as $\sfP$ is proper there is exactly one such $\mfp$. The remaining statements follow: the first by the classification of localizing subcategories and the second immediately.
\end{proof}

The corresponding notion of support is
\begin{displaymath}
\SUPP (X) = \{\sfP\in \SPC (\sfD(R)) \mid X\notin \sfP\} = \{\mfp \in \Spec (R) \mid k(\mfp)\otimes X\neq 0\},
\end{displaymath}
which recovers Foxby's small support, aka the Balmer-Favi notion of support under the identification $\Spc (\sfD^\mathrm{perf}(R)) \cong \SPC (\sfD(R))$. There are two extremal ways to topologise $\SPC (\sfD(R))$. One is to take as a basis of closed subsets the supports of the compact objects and this yields a homeomorphism to $\Spc (\sfD^\mathrm{perf}(R))$ and $\Spec (R)$. The other is to take as a basis the supports of arbitrary objects of $\sfD(R)$ which gives the discrete topology on $\SPC (\sfD(R))$ and a bijection between closed subsets and localizing ideals. See Section~\ref{ssec:top} for some remarks on what happens in between.

\begin{lem}
For a prime $\mfp\in \Spec (R)$ with corresponding small prime $\mathbf{p}$ and big prime $\sfP$ we have
\begin{displaymath}
\sfP \cap \sfD^\mathrm{perf}(R) = \mathbf{p} \text{ and hence } \loc(\mathbf{p}) \subseteq \sfP
\end{displaymath}
with the latter being an equality if and only if $\mfp$ is a generic point.
\end{lem}
\begin{proof}
We compute that
\begin{align*}
\sfP \cap \sfD^\mathrm{perf}(R) &= \{E\in \sfD^\mathrm{perf}(R) \mid \supp (E) \subseteq \Spec (R)\setminus \{\mfp\}\} \\
&= \{E\in \sfD^\mathrm{perf}(R) \mid \supp (E) \subseteq \mcZ(\mfp)\} \\
&= \sfD^\mathrm{perf}_{\mcZ(\mfp)}(R) \\
&= \mathbf{p}
\end{align*}
where the second equality is given by the fact that the support of a compact is closed, and so if it fails to contain $\mfp$ it cannot contain any prime specializing to $\mfp$.

We see that $\loc(\mathbf{p}) \subseteq \sfP$. This is an equality precisely if $\mcZ(\mfp) = \Spec (R)\setminus \{\mfp\}$, i.e.\ if and only if $\mfp$ is the unique prime specializing to $\mfp$; this is precisely the statement that $\mfp$ is generic.
\end{proof}


\subsection{Smashing primes}\label{ssec:sp}

Now let us turn to the characterization of smashing primes. The telescope conjecture holds and so by Corollary~\ref{cor:telescope} the comparison map
\[
\psi\colon \Spc^\mathrm{s}(\sfD(R)) \to (\Spc (\sfD^\mathrm{perf}(R)))^\vee
\]
is a homeomorphism and it identifies the smashing and usual support for compact objects. From this bijection (and the further identification of $\Spc (\sfD^\mathrm{perf}(R))$ with $\Spec (R)$) we see that the meet-prime smashing ideals are precisely the $\sfD_{\mcZ(\mfp)}(R)$ for $\mfp\in \Spec (R)$. 

Now let us discuss the support of a general object. It turns out that the smashing support is not granular enough to deal with arbitrary objects.

\begin{lem}\label{lem:exsupp}
Let $X$ be an object of $\sfD(R)$. Then $\sSupp (X)$ is the specialization closure of $\supp (X)$.
\end{lem}
\begin{proof}
Using the identification of the smashing spectrum with $\Spec (R)$ we compute that
\begin{align*}
\sSupp (X) &= \{\mfp \in \Spec (R) \mid X\notin \sfD_{\mcZ(\mfp)}(R)\} \\
&= \{\mfp \in \Spec (R) \mid \supp (X)\nsubseteq \mcZ(\mfp)\}
\end{align*}
which is precisely the set of specializations of points in $\supp (X)$.
\end{proof}

However, since $\Spec (R)$ is noetherian the dual space $(\Spec (R))^\vee$ is $T_D$ so we can work with the small smashing support of Definition~\ref{defn:smallsupport} and it does the job.

\begin{lem}\label{lem:reverseit}
Treating the bijection $\psi$ as an identification we have, for $X\in \sfD(R)$, an equality
\[
\ssupp (X) = \supp (X).
\]
In particular, the small smashing support classifies localizing subcategories of $\sfD(R)$.
\end{lem}
\begin{proof}
This is a special case of the final statement of Corollary~\ref{cor:telescope}.
\end{proof}


\subsection{Comparisons}

Let us now summarise how to move between these four equivalent setups. We already have comparison maps
\[
\phi \colon \Spc^\mathrm{h}(\sfD^\mathrm{perf}(R)) \to \Spc (\sfD^\mathrm{perf}(R)) \text{ and } \psi\colon \Spc^\mathrm{s}(\sfD(R)) \to (\Spc (\sfD^\mathrm{perf}(R)))^\vee
\]
which are given by $\phi(\mcB) = h^{-1}\mcB \cap \sfD^\mathrm{perf}(R)$ and $\psi(\sfP) = \sfP \cap \sfD^\mathrm{perf}(R)$. We know that these are both homeomorphisms. We also have a map
\[
\SPC(\sfD(R)) \to \Spc (\sfD^\mathrm{perf}(R)) \quad \sfQ \mapsto \sfQ \cap \sfD^\mathrm{perf}(R)
\]
which is continuous in either topology on the big spectrum, and a homeomorphism if we topologize it via supports of perfect complexes.

We also understand very explicitly how to move from homological primes to big primes. By Proposition~\ref{prop:homologicaltobig} there is a comparison map $\chi\colon \Spc^\mathrm{h}(\sfD^\mathrm{perf}(R)) \to \SPC (\sfD(R))$.

\begin{lem}\label{lem:exchi}
The comparison map $\chi$ is a homeomorphism for the topology on the big spectrum generated by supports of compacts. The homological prime $\mcB$ corresponding to $k(\mfp)$ is sent to 
\[
\chi(\mcB) = \Ker(k(\mfp)\otimes -) = \loc(k(\mfq) \mid \mfq \neq \mfp).
\]
\end{lem}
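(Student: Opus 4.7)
The plan is to combine the explicit descriptions of big primes from Section~\ref{ssec:bp} with the commutative triangle of Proposition~\ref{prop:bigtosmall}. Let $\mcB \in \Spc^\mathrm{h}(\sfD^\mathrm{perf}(R))$ be the homological prime corresponding to $k(\mfp)$, i.e.\ satisfying $\phi(\mcB) = \mathbf{p} = \sfD^\mathrm{perf}_{\mcZ(\mfp)}(R)$. First I would identify the candidate big prime. By Neeman's classification \cite{NeeChro} every localizing ideal of $\sfD(R)$ is determined by its support, and the subcategory $\Ker(k(\mfp) \otimes -)$ consists of those $X$ with $\mfp \notin \SUPP X$; the localizing ideal $\loc(k(\mfq) \mid \mfq \neq \mfp)$ has exactly the same support $\Spec R \setminus \{\mfp\}$, so the two agree. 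This is the big prime $\sfP_\mfp$ described in Section~\ref{ssec:bp}.

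Next I would verify $\chi(\mcB) = \sfP_\mfp$. By the commutative triangle $\omega \chi = \phi$ of Proposition~\ref{prop:bigtosmall} (applicable because every localizing ideal of $\sfD(R)$ is radical), we get
\[
\omega(\chi(\mcB)) = \phi(\mcB) = \mathbf{p}.
\]
From Section~\ref{ssec:bp} the bijection $\SPC\sfD(R) \leftrightarrow \Spec R$ is realized precisely by $\omega$, since $\omega(\sfP_\mfq) = \sfP_\mfq \cap \sfD^\mathrm{perf}(R) = \mathbf{q}$. Thus $\chi(\mcB) = \sfP_\mfp$, which by the first step equals $\Ker(k(\mfp)\otimes-)$.

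Finally, for the homeomorphism, I would argue that both $\phi$ and $\omega$ are homeomorphisms and then invoke the same commutative triangle. The map $\phi$ is a homeomorphism by \cite{BaNilpotence}*{Corollary~5.11}. For $\omega$ equipped with the topology on $\SPC\sfD(R)$ generated by $\SUPP x$ for compact $x$: it is a bijection by Section~\ref{ssec:bp}, and the identity
\[
\omega^{-1}(\Supp x) = \{\sfP \mid x \notin \sfP \cap \sfD^\mathrm{perf}(R)\} = \SUPP x
\]
for every $x \in \sfD^\mathrm{perf}(R)$ shows that the basic closed sets correspond bijectively. Since the Balmer topology is generated by these supports, $\omega$ is a homeomorphism. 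Hence $\chi = \omega^{-1} \circ \phi$ is a homeomorphism as well.

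The argument is essentially assembled from pieces already in hand; no step is genuinely an obstacle. If anything, the most delicate point is simply tracking the compatibility of topologies (ensuring we do not inadvertently invoke the Hochster dual, which governs $\psi$ but not $\chi$).
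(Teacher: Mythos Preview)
Your proof is correct and follows essentially the same approach as the paper, which simply states that the result follows by unwinding the definitions and applying \cite{BaNilpotence}*{Corollary~5.11}. Your use of the commutative triangle from Proposition~\ref{prop:bigtosmall} is a clean way to package that unwinding, and your explicit verification that $\omega$ (and hence $\chi$) is a homeomorphism under the compact-support topology fills in the details the paper leaves implicit.
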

\begin{proof}
This is an exercise in unwinding the definitions and applying \cite{BaNilpotence}*{Corollary~5.11} to relate the homological primes to the kernels of the base change functors associated to the residue fields.
\end{proof}

We see that all four spaces are homeomorphic and all notions of support coincide (apart from the usual big/small distinction). This is indicative of the situations in which we have a classification, i.e.\ where all things are as tame as one could hope for.

\subsection{Topologies on the big spectrum}\label{ssec:top}

We now briefly discuss the content of Remark~\ref{rem:topologies}. For the sake of convenience we will frequently identify $\SPC (\sfD(R))$ and $\Spec (R)$ as discussed in Section~\ref{ssec:bp}. Given a regular cardinal $\alpha$ let us denote by $\tau_\alpha$ the topology on $\SPC (\sfD(R))$ obtained by generating the closed subsets via the collection
\[
\{\SUPP (X) \mid X\in \sfD(R)^\alpha\}
\]
where $\sfD(R)^\alpha$ denotes the full subcategory of $\alpha$-compact objects, and by $\tau_\infty$ the topology generated by taking supports of arbitrary objects.

We have already seen that $\tau_{\aleph_0}$ gives the usual topology on $\SPC (\sfD(R)) \cong \Spec (R)$. At the other extreme, the topology $\tau_\infty$ is discrete: for any subset $W$ we have
\[
\SUPP \big( \bigoplus_{\mfp \in W} k(\mfp) \big)= W.
\]
We note that in $\tau_\alpha$ any union of fewer than $\alpha$ closed subsets is still closed\textemdash{}this topology is what one might call $\alpha$-Alexandrov.

As a first approximation one can understand this metamorphosis from the Zariski to the discrete topology in terms of the cardinality of $R$.

\begin{lem}\label{lem:card1}
If $\mcV$ is a closed subset of $\Spec (R)$ then the corresponding left idempotent $\Gamma_\mcV R$ is $\aleph_1$-compact.
\end{lem}
\begin{proof}
For $r\in R$ the stable Koszul complex $K_\infty(r) = (R \to R_r)$, where the map is the canonical one and the complex lives in degrees $0$ and $1$, gives an explicit representative for $\Gamma_{\mcV(r)}R$. We have
\[
K_\infty(r) = \hocolim_i K(r^i)
\]
where $K(r^i)$ is the usual Koszul complex on $r^i$, and so we see $K_\infty(r)$ is a countable homotopy colimit of compacts and hence is $\aleph_1$-compact.

For the general case one can write $\mcV = \mcV(I)$ and, choosing generators $I = (r_1,\ldots,r_n)$, we have 
\[
\Gamma_\mcV R \cong K_\infty(r_1,\ldots,r_n) = K_\infty(r_1) \otimes \cdots \otimes K_\infty(r_n).
\]
One sees easily from what we have already shown that this is $\aleph_1$-compact, for instance by using that $\sfD(R)^{\aleph_1}$ is closed under countable homotopy colimits.
\end{proof}

For a regular cardinal $\alpha$ we denote by $\alpha^+$ its successor.

\begin{lem}\label{lem:card2}
Let $\mfp\in \Spec R$ be a prime ideal such that $S = R\setminus \mfp$ has cardinality $\alpha$. Then $R_\mfp$ is $\alpha^+$-presentable as a module and $\alpha^+$-compact in $\sfD(R)$.
\end{lem}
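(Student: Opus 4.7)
The plan is to realise $R_\mathfrak{p}$ explicitly as a filtered colimit indexed by $S$ and then invoke standard closure properties of $\alpha^+$-presentable/compact objects under colimits of size $< \alpha^+$.

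First I would give a concrete presentation: order $S = R \setminus \mathfrak{p}$ by divisibility ($s \leq t$ iff $t \in sR$), which makes $S$ into a filtered poset of cardinality $\alpha$ since $S$ is multiplicatively closed. Define the diagram $\Delta \colon S \to \Modu R$ sending each $s$ to a copy of $R$ (thought of as $\tfrac{1}{s}R \subseteq R_\mathfrak{p}$), with transition map for $s \leq t = sr$ being multiplication by $r$. One checks directly that $\colim_{s \in S} \Delta \cong R_\mathfrak{p}$ as $R$-modules; this is just the usual construction of localisation as a colimit.

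Next, I would appeal to the standard fact (for locally presentable categories such as $\Modu R$) that if $\kappa$ is a regular cardinal, a colimit of a diagram indexed by a category of cardinality $< \kappa$ whose vertices are $\kappa$-presentable is again $\kappa$-presentable. Taking $\kappa = \alpha^+$: the indexing category $S$ has $\alpha < \alpha^+$ objects (and at most $\alpha \cdot \alpha = \alpha$ morphisms), and each $R$ is finitely presented, hence \emph{a fortiori} $\alpha^+$-presentable. This yields that $R_\mathfrak{p}$ is $\alpha^+$-presentable as an $R$-module.

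For the derived category statement I would argue analogously. Since filtered colimits in $\Modu R$ are exact, the colimit above agrees with the homotopy colimit in $\sfD(R)$; equivalently, since $R_\mathfrak{p}$ is flat, its image in $\sfD(R)$ is computed by this colimit. The subcategory $\sfD(R)^{\alpha^+}$ of $\alpha^+$-compact objects is closed under homotopy colimits of diagrams of size $< \alpha^+$ and contains $R$ (which is compact), so $R_\mathfrak{p} \in \sfD(R)^{\alpha^+}$. The only real subtlety is the cardinality bookkeeping (ensuring $|S| < \alpha^+$ gives the successor cardinal and not $\alpha$ itself, which is why passing to $\alpha^+$ is unavoidable), but once this is in place the argument is essentially formal.
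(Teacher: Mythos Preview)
Your argument is correct and follows essentially the same route as the paper: write $R_\mfp$ as an $S$-indexed filtered colimit of copies of $R$ and use that $|S|=\alpha<\alpha^+$ together with closure of $\alpha^+$-presentable objects under such colimits. The only difference is cosmetic: for the passage to $\sfD(R)$ the paper simply cites a result of Krause identifying $\alpha^+$-presentable modules with $\alpha^+$-compact objects of $\sfD(R)$, whereas you rerun the colimit argument directly at the derived level---both are fine.
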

\begin{proof}
We can write $R_\mfp$ as a filtered colimit of copies of $R$ with index set $S$. Thus $R_\mfp$ is $\alpha^+$-presentable and so, for instance by appealing to \cite{KrAuslander}*{Theorem~5.10}, is $\alpha^+$-compact in $\sfD(R)$.
\end{proof}

\begin{rem}
We note that, quite frequently, this bound is far from optimal.
\end{rem}

\begin{prop}
Let $\mfp\in \Spec (R)$ be a prime ideal such that $S = R\setminus \mfp$ has cardinality $\alpha\geq \aleph_0$. Then $\Gamma_\mfp R$ is $\alpha^+$-compact, and so $\{\mfp\}$ is closed in $\tau_{\alpha^+}$. In particular, if $\alpha \geq \max\{\vert R \vert, \vert \Spec R \vert\}$ then $\tau_{\alpha^+}$ is the discrete topology on $\Spec (R)$.
\end{prop}
\begin{proof}
By Lemma~\ref{lem:card2} $R_\mfp$ is $\alpha^+$-compact and by Lemma~\ref{lem:card1} so is $\Gamma_{\mcV(\mfp)}R$. Hence $\Gamma_\mfp R = R_\mfp \otimes \Gamma_{\mcV(\mfp)}R$ is also $\alpha^+$-compact. It has support precisely $\{\mfp\}$ and so this subset is closed in $\tau_{\alpha^+}$. The second statement follows as, by assumption on $\alpha$, any set $W$ of primes is $\alpha^+$-small and each $\Gamma_\mfp R$ is $\alpha^+$-compact so
\[
W = \SUPP \big( \bigoplus_{\mfp \in W} \Gamma_\mfp R \big)
\]
is closed in $\tau_{\alpha^+}$.
\end{proof}



\section{An example: no telescope here}\label{sec:notelescope}
\setcounter{subsection}{1}


Let $(A,\mfm,k)$ be a non-noetherian rank $1$ valuation domain, e.g.\ we could take for $A$ the perfection of $\FF_p[[x]]$. 
The corresponding scheme $\Spec (A)$ has underlying space 

\begin{displaymath}
	\begin{tikzpicture}
    
		\node (v0) at (0,0) {};
    \node (va) at (0,1) {};

    \draw[fill] (v0)  circle (2pt) node [left] {$(0)$};
		\draw[fill] (va)  circle (2pt) node [left] {$\mfm$};

		\path[-] (v0) edge  node [above] {} (va);

  \end{tikzpicture}
\end{displaymath}
as for a discrete valuation ring. The maximal ideal $\mfm$ is flat with $\mfm^2 = \mfm$ and so $\mfm \to A$ is a left idempotent in $\sfD(A)$. This gives rise to a non-finite smashing localization
\begin{displaymath}
\loc(\mfm) \to \sfD(A) \to \sfD(k).
\end{displaymath}


\subsection{The smashing spectrum}


All smashing subcategories of $\sfD(A)$ are known by \cite{BazzoniStovicek} and fit into the following picture, where $Q$ denotes the function field of $A$:

\begin{displaymath}
	\begin{tikzpicture}
    
		\node (v-1) at (0,-2) {};
		\node (v0) at (0,0) {};
    \node (va) at (-2,2) {};
		\node (vc) at (2,2) {};
		\node (vt) at (0,4) {};

    \draw[fill] (v-1)  circle (2pt) node [below] {$0$};
		\draw[fill] (v0)  circle (2pt) node [left] {$\loc(Q/\mfm)$};
		\draw[fill] (va)  circle (2pt) node [left] {$\loc(\mfm)$};
		\draw[fill] (vc)  circle (2pt) node [right] {$\sfD_{\{\mfm\}}(A)$};
		\draw[fill] (vt)  circle (2pt) node [above] {$\sfD(A)$};

    \path[-] (v-1) edge  node [above] {} (v0);
		\path[-] (v0) edge  node [above] {} (va);
		\path[-] (v0) edge  node [above] {} (vc);
		\path[-] (vc) edge  node [above] {} (vt);
		\path[-] (va) edge  node [above] {} (vt);

  \end{tikzpicture}
\end{displaymath}

This frame is finite and so is not only spatial but is a coherent frame. We see that the meet-prime smashing ideals are $\sfP = \loc(\mfm), \sfQ = \sfD_{\{\mfm\}}(A)$, and $0$. The open subsets of $\Spc^\mathrm{s}(\sfD(A))$ are
\[
U_0 = \varnothing, U_{\loc(Q/\mfm)} = \{0\}, U_{\loc(\mfm)} = \{0, \sfQ\}, U_{\sfD_{\{\mfm\}}(A)} = \{0, \sfP\}, \text{ and } U_{\sfD(A)} = \Spc^\mathrm{s}(\sfD(A)).
\]
Thus the smashing spectrum is
\begin{displaymath}
	\begin{tikzpicture}

		\node (v0) at (0,0) {};
    \node (va) at (-1,1) {};
		\node (vc) at (1,1) {};

		\draw[fill] (v0)  circle (2pt) node [below] {$0$};
		\draw[fill] (va)  circle (2pt) node [left] {$\sfP$};
		\draw[fill] (vc)  circle (2pt) node [right] {$\sfQ$};

		\path[-] (v0) edge  node [above] {} (va);
		\path[-] (v0) edge  node [above] {} (vc);

  \end{tikzpicture}
\end{displaymath}
where $\sfP$ and $\sfQ$ are closed points and $0$ is open (remember that we are in the Hochster dual picture and so to compare with $\Spec (A)$ we should dualize to get a local space with two generic points).

Next let us turn to describing the small support. The points $\sfP$ and $\sfQ$ are closed and so
\[
\Gamma_\sfP = k \text{ and } \Gamma_\sfQ = Q
\]
are the corresponding right idempotents. The open point $0$ is generic, so $V_0 = \Spc^\mathrm{s}(\sfD(A))$ and we can write $\{0\} = U_{\loc(Q/\mfm)}\cap V_0$ yielding
\[
\Gamma_0 = Q/\mfm.
\]
Thus, for instance, if $a\in A$ is a non-zero divisor then the small (and big) support of the perfect complex $A/(a)$ is
\[
\ssupp ( A/(a) ) = \{\sfP, 0\} = U_{\sfD_{\{\mfm\}}(A)}.
\]
We note this is not a closed subset (again one should expect it to be open, as it is, rather than closed since we are in the Hochster dual picture to $\Spc$).

\begin{rem}\label{rem:notsmalltop}
If one topologizes the smashing spectrum only using the supports of compact objects then one does not get the correct space. Indeed, by compatibility with suspension and cones we have for perfect complexes $t,s \in \sfD^\mathrm{perf}(A)$ that
\[
\thick(t) = \thick(s) \text{ implies } \sSupp (t) = \sSupp (s)
\]
(or use Theorem~\ref{thm:comparison}). Thus the possible smashing supports of objects of $\sfD^\mathrm{perf}(A)$ are
\[
\varnothing, \Spc^\mathrm{s}(\sfD(A)), \text{ and } \{\sfP, 0\}
\]
and the corresponding space is not even sober.

\end{rem}

\begin{rem}\label{rem:ssupppreimage}
In this example the small smashing support is not compatible with the comparison map $\psi$. We have $\supp (\mfm) = \Spc (\sfD^\mathrm{perf}(A))$ but
\[
\ssupp (\mfm) = U_{\loc(\mfm)} = \{0, \sfQ\} \subsetneq \Spc^\mathrm{s}(\sfD(A)) = \psi^{-1} (\Spc (\sfD^\mathrm{perf}(A)))^\vee,
\]
where the first equality is Proposition~\ref{prop:ssuppE}.
\end{rem}

\begin{rem}
By assigning some notion of dimension to points of the Balmer spectrum one can, at least when the dual is $T_D$, use the corresponding filtration to decompose the category $\sfT$ into pieces supported at individual primes. This has been done by various authors, with varying levels of sophistication used in reconstructing $\sfT$ from said pieces.

In the cases where the telescope conjecture fails, such as in the current example, one may instead consider decomposing the category $\sfT$ over the smashing spectrum. This gives a finer decomposition.

For instance, let us meditate briefly on the case of $\sfD(A)$, as considered above. Working with the spectrum of the compacts we get the recollement corresponding to the triangle
\[
\Gamma_{\mfm}A \to A \to Q
\]
i.e.\ the gluing of $\Spec A$ from the generic point and the formal scheme at the closed point. On the other hand, when working with the smashing spectrum, this refines to the recollement corresponding to the triangle
\[
\Sigma^{-1}Q/\mfm \to A \to Q\times k.
\]
In some sense we have `gone deeper' than the closed point and the residue field has emerged as generic information.

More generally, when we know $\Spcs(\sfT)$ exists as a space, we can apply much of the machinery that has already been developed for the Balmer spectrum, for instance \cites{BaFilt,StevensonDimension,BalchinGreenlees}, to produce such refined decompositions more generally. In fact, one can already do this at the level of frames, and so being spatial should be inessential.
\end{rem}


\subsection{Big primes}


One gets all known localizing subcategories from $\Spc^\mathrm{s}(\sfD(A))$ by looking at arbitrary subsets, i.e.\ in terms of the small smashing support, but there is currently no classification of localizing ideals. Given this (we don't even know there is a set of localizing subcategories) $\SPC (\sfD(A))$ is a bit more delicate. For instance, what we have in the noetherian setting doesn't generalize naively:
\begin{displaymath}
\loc(k\oplus Q) = \sfD(k)\times \sfD(Q) \subsetneq \sfD(A).
\end{displaymath}
It would be very interesting to compute $\SPC (\sfD(A))$, or at least some of it, in this example. We know at least two primes through the following easy lemma.

\begin{lem}
Suppose $\sfL$ is a localizing ideal in a big tt-category $\sfT$. If $\sfL$ is a maximal ideal, i.e.\ $\sfT/\sfL$ is tt-minimal, then $\sfL$ is prime.
\end{lem}
\begin{proof}
First note that maximality guarantees $\sfL$ is radical. Indeed, if it were not then we would have a non-zero $X\in \sfT/\sfL$ with $X\otimes X = 0$. In particular, $\ker X\otimes(-)$ is non-zero and so must contain $\unit$. Hence $X \cong 0$, which is a contradiction. 

Suppose then that $\sfI,\sfJ$ are radical localizing ideals with $\sfI \cap \sfJ \subseteq \sfL$. If $\sfI \nsubseteq \sfL$ then $\loc(\sfL, \sfI) = \sfT$. Hence
\[
\sfJ = \sfJ \otimes \sfT = \sfJ \otimes \loc(\sfL,\sfI) = \loc(\sfJ \otimes \sfL, \sfJ\otimes \sfI) \subseteq \loc(\sfL)
\]
where the final equality holds as $\sfJ \otimes \sfI \subseteq \sfJ\cap \sfI \subseteq \sfL$.
\end{proof}

Since $\sfD(k)$ and $\sfD(Q)$ are minimal it follows that $\loc(\mfm)$ and $\sfD_{\{\mfm\}}(A)$ are big primes, lying over $0$ and $\sfD^\mathrm{perf}_{\{\mfm\}}(A)$ respectively. One is tempted to suspect that $\loc(Q\oplus k)$ is also maximal and hence prime, but we don't know this.



\appendix

\section{Setting the record straight}\label{sec:fail}

In this appendix we record some results from the previous version, which may still be useful, as well as explicitly pointing out the errors in the `proof' of spatiality in our previous work and in Wagstaffe's thesis \cite{WagstaffeThesis}, along with counterexamples to the methods of proof.

\subsection{Some lemmas which are correct}

Let $\sfT$ be a big tt-category. We record a few preliminary facts about smashing ideals, which were used in the original `proof' in the hope they might still find application in further study of the smashing frame.

\begin{lem}\label{lem:colim}
Let $\{\sfS_i \mid i\in I\}$ be a chain of smashing ideals with corresponding left idempotents $\{E_i \mid i\in I\}$. Then the flat left idempotent in $\Modu \sfT^c$ corresponding to $\sfS = \vee_i \sfS_i$ is $E = \colim_i E_i$. Similarly, if $F_i$ are the corresponding right idempotents for the $\sfS_i$ then $\colim_i F_i$ is the right idempotent for $\sfS$.
\end{lem}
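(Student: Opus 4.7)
My strategy is to lift the problem to $\sfT$, construct the idempotent as a (homotopy) colimit there, and then transfer to $\Modu \sfT^c$ using that the restricted Yoneda $h$ is strong monoidal and converts hocolims along chains into filtered colimits in $\Modu \sfT^c$ (since $\sfT^c$ is a set of compact generators and $\Hom(c,-)$ commutes with such hocolims). Throughout I interpret the statement, via the abuse of notation in Reminder~\ref{rem:isos}, as claiming that $h(E) \simeq \colim_i h(E_i)$ where $E \in \sfT$ is the left idempotent for $\sfS$; the argument for $F$ is entirely dual.

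The first step is to produce canonical structure maps $E_i \to E_j$ (and $F_i \to F_j$) for $i \le j$. Since $E_i \in \sfS_i \subseteq \sfS_j$, one has $F_j \otimes E_i = 0$, so tensoring the localization triangle for $\sfS_j$ with $E_i$ yields an iso $E_j \otimes E_i \simeq E_i$. The composite $E_i \simeq E_j \otimes E_i \to E_j \otimes \unit = E_j$ is the desired map; dually, the $\sfS_i$-locality of $F_j$ together with the universal property of $\unit \to F_i$ produces $F_i \to F_j$. A quick check shows these assemble $\{E_i\}$ and $\{F_i\}$ into directed systems, and the triangles $E_i \to \unit \to F_i$ into a morphism of directed systems with constant middle.

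Next, set $E := \hocolim_i E_i$ and $F := \hocolim_i F_i$ in $\sfT$. The morphism of triangles above hocolimits to a triangle $E \to \unit \to F$, using that the hocolim of the constant chain at $\unit$ is $\unit$. Since $-\otimes-$ commutes with hocolims in each variable, and Step~1 shows $E_i \otimes E_j \simeq E_{\min(i,j)}$, one computes
\[
E \otimes E \simeq \hocolim_{(i,j)} E_i \otimes E_j \simeq \hocolim_k E_k \simeq E,
\]
so $E$ is a left idempotent (and dually $F$ is a right idempotent). To identify its smashing ideal with $\sfS = \bigvee_i \sfS_i$: the relation $E_i \otimes E \simeq E_i$ forces $E_i \in \Loc_\otimes \langle E\rangle$, whence $\sfS_i \subseteq \Loc_\otimes \langle E \rangle$ and therefore $\bigvee_i \sfS_i \subseteq \Loc_\otimes\langle E\rangle$; conversely, $E \in \bigvee_i \sfS_i$ since it is a hocolim of objects already in that localizing ideal. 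Thus $E$ is the left idempotent for $\sfS$, and applying $h$ finishes the proof.

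The main obstacle I foresee is the bookkeeping around cofinality for chains of uncountable length, since mapping telescopes in $\sfT$ are most comfortable for sequential diagrams; one either reduces to a cofinal countable subchain (using that $\mcS(\sfT)$ is a set and that each $E_i$ is determined by its image in $\Modu \sfT^c$) or does the whole argument directly in $\Modu \sfT^c$, where arbitrary filtered colimits exist, are exact, and commute with $\otimes$. Once this is handled, the rest is formal manipulation of idempotents.
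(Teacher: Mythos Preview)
Your outline is sound, and your option~(b) --- doing the whole argument directly in $\Modu \sfT^c$ --- is exactly what the paper does. The paper never attempts to form the colimit in $\sfT$: it constructs the structure maps just as you do, takes the filtered colimit in $\Modu \sfT^c$ (where such colimits exist, are exact, and commute with $\otimes$), verifies the result is a flat left idempotent, and then identifies it with the known idempotent $E'$ for $\sfS$ by checking $E'\otimes E \cong E$ (rather than your direct computation of $E\otimes E$, though either works). The right-idempotent case is handled by the same argument, with Remark~\ref{rem:rightyidem} noting that for a chain the general colimit description of $F$ from \cite{BKSframe}*{Proposition~5.2} collapses to $\colim_i F_i$.

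Your option~(a), however, is not viable: a chain in the set $\mcS(\sfT)$ need not admit a cofinal countable subchain, and nothing in your parenthetical produces one. More generally, the detour through $\sfT$ buys you only trouble. Mapping telescopes in a bare triangulated category are comfortable only for sequential diagrams; your double-hocolim manipulation $E\otimes E \simeq \hocolim_{(i,j)} E_i\otimes E_j$ and the cofinality step already presuppose the kind of functorial colimit calculus that $\sfT$ does not provide without an enhancement. Since you ultimately pass to $\Modu\sfT^c$ via $h$ anyway, and since the identifications in Reminder~\ref{rem:isos} mean the statement is really about flat idempotents in $\Modu\sfT^c$, you should work there from the outset and avoid the issue entirely.
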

\begin{proof}
Let us start by making sense of the claimed colimit. If $i<j$ in $I$ then $\sfS_i \subseteq \sfS_j$ and so $E_i \otimes E_j \cong E_i$. The structure maps for the colimit are then given by tensoring $\varepsilon_i\colon E_i \to \unit$ with $E_j$ and using the above isomorphism to obtain a map $E_i \to E_j$.

As filtered colimits in $\Modu \sfT^c$ are exact, and the tensor product commutes with colimits, the colimit $E$ is still flat. The universal property of the colimit determines a morphism $\varepsilon\colon E\to \unit$ and one verifies readily that $E$ is a left idempotent. 

Since $E\otimes E_i \cong E_i$ we see that $\im(E\otimes-)$ contains each $\sfS_i$ and hence contains $\sfS$. On the other hand, if $E'$ is the left idempotent corresponding to the join $\sfS$ then $E'\otimes E_i \cong E_i$ and the colimit formula yields $E' \otimes E \cong E$. It follows that $\im(E\otimes-) \subseteq \sfS$ and so $E\cong E'$.

One can prove the statement for right idempotents similarly (see also Remark~\ref{rem:rightyidem}).
\end{proof}

\begin{rem}\label{rem:rightyidem}
Since $I$ is linearly ordered any tensor product $F_{i_1}\otimes \ldots \otimes F_{i_n}$ with $i_r \in I$ is isomorphic to $F_j$ where $j = \max\{i_1,\ldots i_n\}$. By \cite{BKSframe}*{Proposition~5.2} we can write $F = \colim_J F_J$ where $J$ runs over the finite subsets of $I$ and $F_J = \otimes_{j\in J}F_j$. It follows that, by collapsing the indexing set to a final subset to eliminate redundancy, we can write $F = \colim_i F_i$.
\end{rem}

\begin{lem}\label{lem:fp1}
Let $\{\mcM_i \mid i\in I\}$ be a chain of Serre subcategories of $\modu \sfT^c$. Then 
\[
\serre(\cup_i \mcM_i) = \cup_i \mcM_i
\]
i.e.\ the union is already a Serre subcategory. In particular, if
\[
M \in \vee_i \mcM_i = \serre(\cup_i \mcM_i)
\]
then there is a $j\in I$ such that $M\in \mcM_j$.
\end{lem}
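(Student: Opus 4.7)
The plan is to show directly that the union $\mcM := \cup_i \mcM_i$ is already Serre, since then the closure operator $\serre(-)$ acts trivially on it and the ``in particular'' clause becomes automatic. Recall that a full subcategory of $\modu \sfT^c$ is Serre if and only if it is closed under subobjects, quotients, and extensions.

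First I would verify closure under subobjects and quotients: given $M \in \mcM$, pick $j \in I$ with $M \in \mcM_j$; any subobject or quotient of $M$ lies in $\mcM_j$ because $\mcM_j$ is Serre, and hence lies in $\mcM$. This step is immediate and uses nothing about the chain structure.

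The only place the chain hypothesis enters is closure under extensions. Given a short exact sequence
\[
0 \to M' \to M \to M'' \to 0
\]
in $\modu \sfT^c$ with $M', M'' \in \mcM$, there exist $i, j \in I$ with $M' \in \mcM_i$ and $M'' \in \mcM_j$. Since $\{\mcM_i\}$ is linearly ordered by inclusion, one of $\mcM_i, \mcM_j$ contains the other; say $\mcM_i \subseteq \mcM_j$, so that both $M'$ and $M''$ belong to $\mcM_j$. Then $M \in \mcM_j \subseteq \mcM$ because $\mcM_j$ is Serre. This establishes $\mcM$ is Serre, so $\serre(\cup_i \mcM_i) = \cup_i \mcM_i$.

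Finally, the ``in particular'' clause is a direct consequence: if $M \in \vee_i \mcM_i = \serre(\cup_i \mcM_i) = \cup_i \mcM_i$, then by definition of the union there is some $j \in I$ with $M \in \mcM_j$. There is no real obstacle in this argument; the content is entirely in the observation that the chain condition lets one promote two indices to a single common one, which is exactly what is needed for the extension axiom.
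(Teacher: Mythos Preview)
Your proof is correct and follows essentially the same approach as the paper's: both verify directly that the union is closed under subobjects, quotients, and extensions, using the chain condition only to find a single index containing two given members for the extension axiom. The paper's presentation differs only cosmetically (it treats extensions first and then remarks that the subobject/quotient case is similar).
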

\begin{proof}
Let $M' \to M \to M''$ be a short exact sequence in $\modu \sfT^c$. Suppose that $M'$ and $M''$ lie in $\mcM = \cup_i \mcM_i$. We can pick a sufficiently large $j$ such that $M'$ and $M''$ lie in $\mcM_j$. Then, since $\mcM_j$ is Serre, we have $M \in \mcM_j$. As $\mcM_j$ is contained in $\mcM$ we thus see $M\in \mcM$. A similar argument shows that if $M\in \mcM$ then $M'$ and $M''$ are in $\mcM$ and so it is a Serre subcategory as claimed.
\end{proof}

\begin{lem}\label{lem:join}
Let $\{\sfS_i \mid i\in I\}$ be a set of smashing ideals with corresponding right idempotents $\{F_i \mid i\in I\}$ and join $\sfS$ with right idempotent $F$. Then $\mcB_F$ is the smallest localizing Serre subcategory containing $\cup_i \mcB_{F_i}$.
\end{lem}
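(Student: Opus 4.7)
The plan is to establish the two inclusions separately. Let $\mcC$ denote the smallest localizing Serre subcategory of $\Modu \sfT^c$ containing $\bigcup_i \mcB_{F_i}$.

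The containment $\mcC \subseteq \mcB_F$ is easy. Since $\sfS_i \subseteq \sfS$, the lattice isomorphism of Reminder~\ref{rem:isos} (under which tensor products of flat right idempotents correspond to joins of smashing ideals) gives $F \otimes F_i \cong F$, so $F_i \otimes M = 0$ forces $F \otimes M \cong F \otimes F_i \otimes M = 0$. Thus each $\mcB_{F_i}$ lies in $\mcB_F$, and since $\mcB_F = \Ker(F \otimes -)$ is itself a localizing Serre subcategory (by flatness of $F$ and cocontinuity of the tensor product), it contains $\mcC$.

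For the reverse inclusion, I first handle the finite case. Given two right idempotents $F_1, F_2$ with $F_1 \otimes F_2 \otimes M = 0$, note that $F_2 \otimes M \in \mcB_{F_1} \subseteq \mcC$. Applying the flat functor $F_2 \otimes -$ to the four-term exact sequence determined by the unit map $M \to F_2 \otimes M$, and using that $F_2 \otimes \eta_{F_2}$ is an isomorphism (the defining property of a right idempotent), places both the kernel and cokernel of $M \to F_2 \otimes M$ into $\mcB_{F_2} \subseteq \mcC$. Serre-ness of $\mcC$ then yields $M \in \mcC$ via the two evident short exact sequences. A straightforward induction shows that for every finite $J \subseteq I$ the kernel $\mcB_{F_J}$ (with $F_J = \bigotimes_{j \in J} F_j$) is contained in $\mcC$.

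For the general statement I would invoke the colimit expression $F \cong \colim_J F_J$ over finite $J \subseteq I$ recalled in Remark~\ref{rem:rightyidem}. Since $F$ is flat and $\mcC$ is closed under filtered colimits, while any $M \in \mcB_F$ is the filtered colimit of its finitely presented subobjects, each of which still lies in $\mcB_F$ by flatness, it is enough to treat finitely presented $M$. For such $M$ the functor $\Hom(M,-)$ commutes with the filtered colimit, so the compatible system of unit maps $\eta_J \colon M \to F_J \otimes M$ represents the zero element $\eta \colon M \to F \otimes M = 0$, and finite presentation yields some $J_0$ with $\eta_{J_0} = 0$. Tensoring with $F_{J_0}$, right-idempotency renders $F_{J_0} \otimes \eta_{J_0}$ an isomorphism, so $F_{J_0} \otimes M = 0$, i.e.\ $M \in \mcB_{F_{J_0}}$, which sits inside $\mcC$ by the finite case.

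The conceptual obstacle is the extraction of a single finite witness $J_0$ from the hypothesis $F \otimes M = 0$: this is precisely where flatness (to reduce to finitely presented $M$), finite presentation (to pull a zero morphism through a filtered colimit), and right-idempotency of $F_{J_0}$ (to upgrade $\eta_{J_0} = 0$ to $F_{J_0} \otimes M = 0$) must work in concert. The two-idempotent calculation underlying the finite case is routine once flatness and idempotency of the $F_j$ are in hand.
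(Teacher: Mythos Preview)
Your proof is correct in its essentials and follows a genuinely different path from the paper. The paper argues via the correspondence between localizing Serre subcategories of $\Modu \sfT^c$ and their perpendicular classes of injectives: writing $\mcM$ for your $\mcC$, one has $I_\mcM = \bigcap_i I_{\mcB_{F_i}}$, and any injective $J \in I_\mcM$ is flat (injectives in $\Modu\sfT^c$ are images under $h$ of pure-injectives in $\sfT$), so the colimit formula for $F$ and the vanishing $J\otimes E_i \cong 0$ give $J \otimes F \cong J$, whence $J \in I_{\mcB_F}$. This is short but leans on the flatness of injectives and the injective characterisation of localizing subcategories. Your route is more elementary and self-contained: you handle finite $I$ by an explicit Serre-closure chase on the unit map $M \to F_2 \otimes M$, then exploit finite presentation to extract a finite witness $J_0$ from the filtered colimit $F = \colim_J F_J$. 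Your approach makes the role of finite type transparent and avoids the perpendicular-injective machinery entirely.

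One small repair is needed in your reduction step. The claim that an arbitrary $M \in \mcB_F$ is ``the filtered colimit of its finitely presented subobjects'' is not obvious in a merely locally coherent Grothendieck category (finitely generated subobjects of arbitrary objects need not be finitely presented). The clean fix is immediate from Reminder~\ref{rem:isos}: since $\mcB_F = (\mcB_F^{\fp})^\shortrightarrow$ is by definition the smallest localizing subcategory containing $\mcB_F^{\fp}$, and $\mcC$ is localizing, it suffices to show $\mcB_F^{\fp} \subseteq \mcC$\textemdash{}which is exactly what your finitely-presented argument establishes.
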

\begin{proof}
Let us denote by $\mcM$ the localizing Serre subcategory generated by the $\mcB_{F_i}$. It is evident that $\mcM \subseteq \mcB_F$.

Any localizing Serre subcategory of $\Modu \sfT^c$ is determined by the class of injective objects in its right perpendicular category (see \cite{MR0389953}*{Chapter VI.3}). Thus it is sufficient to show these classes for $\mcM$ and $\mcB_F$, which we denote by $I_\mcM$ and $I_{\mcB_F}$ respectively, agree. Observe that
\[
I_\mcM = \cap_i I_{\mcB_{F_i}} \text{ and } I_{\mcB_F} \subseteq I_\mcM
\]
the former by construction and the latter since $\mcM \subseteq \mcB_F$.

Suppose that $J\in I_\mcM$ and note that since $J$ is injective it comes from an object of $\sfT$ which we also call $J$ (this is a consequence of Brown representability, see e.g.\ \cite{KrTele}*{Lemma~1.7}). By our description of $I_\mcM$ the injective $J$ is in the right perpendicular of each $\mcB_{F_i}$.  By \cite{KrCQ}*{Theorem~12.1 (1)} we can identify $\sfS_i^\perp$ with those objects which the restricted Yoneda functor sends to the right perpendicular of $\mcB_{F_i}$ (being in Krause's $\mathfrak{I}^\perp$ is equivalent to being right orthogonal to the localizing Serre subcategory generated by the images of the maps in $\mathfrak{I}$). So we see $J\in \sfS_i^\perp$, and hence  $J\otimes E_i \cong 0$, for each of the left idempotents $E_i$ corresponding to the $\sfS_i$. It follows, using the description of $F$ as a colimit from \cite{BKSframe}*{Proposition~5.2} and flatness of $J$, that $J\otimes F \cong J$. Invoking flatness of $J$ again we deduce that $E\otimes J \cong 0$ where $E$ is the left idempotent for $\sfS$. We claim this implies that $J\in I_{\mcB_F}$. We have shown $E\otimes J \cong 0$ and so $J\in \sfS^\perp$. The claim then follows from another application of \cite{KrCQ}*{Theorem~12.1}. Hence $I_{\mcB_F} = I_\mcM$ as desired.
\end{proof}

\begin{lem}\label{lem:fp2}
Let $\{\sfS_i \mid i\in I\}$ be a chain of smashing ideals with corresponding right idempotents $\{F_i \mid i\in I\}$ and join $\sfS$ with right idempotent $F$. If $M\in \modu \sfT^c$ satisfies $M\otimes F \cong 0$ then there is an $i\in I$ such that $M\otimes F_i \cong 0$.
\end{lem}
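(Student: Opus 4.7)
The plan is to use the colimit description $F \cong \colim_i F_i$ from Lemma~\ref{lem:colim} together with the finite presentation of $M$ to detect the vanishing of $M\otimes F$ at a finite stage of the chain.

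Write $F \cong \colim_i F_i$ in $\Modu \sfT^c$, with transition maps $F_i \to F_j$ for $i \le j$ satisfying the compatibility $u_j = (F_i \to F_j)\circ u_i$ for the units $u_i\colon \unit \to F_i$. Since tensoring in $\Modu \sfT^c$ commutes with colimits, $M\otimes F \cong \colim_i(M\otimes F_i)$, and since $M$ is finitely presented, $\Hom(M,-)$ commutes with filtered colimits, giving $\Hom(M,M\otimes F) \cong \colim_i \Hom(M,M\otimes F_i)$. Under this identification, the compatible family $(\id_M \otimes u_i)_i$ on the right-hand side corresponds to $\id_M \otimes u\colon M \to M\otimes F$ on the left. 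This latter map vanishes by assumption, so filteredness of the colimit provides some index $j$ for which $\id_M \otimes u_j\colon M \to M\otimes F_j$ is the zero morphism.

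To conclude, invoke the right-idempotent property of $F_j$: the map $u_j \otimes \id_{F_j}\colon F_j \to F_j \otimes F_j$ is an isomorphism. Tensoring with $M$ yields an isomorphism $\id_M \otimes u_j \otimes \id_{F_j}\colon M \otimes F_j \to M \otimes F_j \otimes F_j$. But $\id_M \otimes u_j = 0$ forces this same map to also be zero, so $M \otimes F_j \cong 0$ as required.

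The main subtlety is the identification of $\id_M \otimes u$ with the image of the family $(\id_M \otimes u_i)$ in the colimit of Hom-groups, which is a routine consequence of the commutation of tensor with colimits and the unit compatibility recorded above. No significant obstacle is anticipated beyond this bookkeeping.
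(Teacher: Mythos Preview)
Your argument is correct and takes a genuinely different route from the paper. The paper works on the Serre-subcategory side of the dictionary: by Lemma~\ref{lem:join} the localizing Serre ideal $\mcB_F$ is generated by $\cup_i \mcB_{F_i}$, hence by $\cup_i \mcB_{F_i}^\fp$, and since $I$ is a chain Lemma~\ref{lem:fp1} shows this union is already Serre and therefore equals $\mcB_F^\fp$; thus $M\in\mcB_F^\fp$ forces $M\in\mcB_{F_j}^\fp$ for some $j$. Your proof instead stays with the idempotents, using the colimit formula $F\cong\colim_i F_i$ from Lemma~\ref{lem:colim}, the finite presentation of $M$ to push the vanishing of $\id_M\otimes u$ down to some $\id_M\otimes u_j=0$, and then the right-idempotent identity to kill $M\otimes F_j$. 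Your route is arguably more elementary---it bypasses both Lemma~\ref{lem:fp1} and Lemma~\ref{lem:join}---while the paper's route makes transparent the structural reason (the finitely presented parts of the $\mcB_{F_i}$ exhaust $\mcB_F^\fp$) and reuses machinery already in place for the proof of spatiality.
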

\begin{proof}
By Lemma~\ref{lem:join} the localizing Serre ideal $\mcB_F$ of $\Modu \sfT^c$ is generated by $\cup_i \mcB_{F_i}$. The top-left of Reminder~\ref{rem:isos} reminds us that each $\mcB_{F_i}$ is generated by its subcategory of finitely presented objects $\mcB^\fp_{F_i}$. In particular $\mcB_F$ is generated by $\cup_i \mcB^\fp_{F_i}$. We now use that $I$ is a chain: Lemma~\ref{lem:fp1} tells us that this union is already a Serre subcategory and so must be $\mcB^\fp_F$ (which in turn determines $\mcB_F$ by closing under filtered colimits). 

Thus, if $M \in \mcB^\fp_F$ then there is an $i\in I$ such that $M\in \mcB^\fp_{F_i}$, i.e.\ $F_i\otimes M\cong 0$. 
\end{proof}

\subsection{The bit that isn't correct}

The problem with the original argument is the following statement:

\begin{lemnt}\label{notalemma}
Let $\sfS$ be a smashing ideal with right idempotent $F$ and $M\in \modu \sfT^c$ a finitely presented module such that $M\notin \mcB_F$. Then there is a (maximal) meet-prime smashing ideal $\sfP$ with right idempotent $H$ such that $\sfS\subseteq \sfP$ and $M \notin \mcB_H$.
\end{lemnt}

The issue with the proof was an incorrect identification $\mcB_{F_1} \cap \mcB_{F_2} = \mcB_{F_1} \wedge \mcB_{F_2}$ for a pair of right idempotents $F_1$ and $F_2$. This identification holds when restricted to flat objects in these subcategories, but is not true for general objects. Let us give a counterexample to both this identification, and to the above claim.

\begin{ex}\label{counter1}
Consider $\sfD^\mathrm{b}(\Coh \PP^1_k)$ the bounded derived category of coherent sheaves over $\PP^1_k$ for some field $k$ (which we now drop from the notation). Pick a non-trivial map $f\colon \mcO \to \Sigma\mcO(-2)$, and note that $f$ vanishes on any proper open subset of $\PP^1$. In particular, since the telescope conjecture holds it is killed by any non-trivial monoidal smashing localization (where by non-trivial we mean actually inverting some non-invertible map).

We are then led to consider $M = \im(H_f)$ the image of $f$ in $\Modu \sfD^\mathrm{b}(\Coh \PP^1)$. By what we have observed above any flat right idempotent, except for the unit, will kill $M$. Thus, for instance, if $U = \PP^1 \setminus \{0\}$ and $V = \PP^1 \setminus \{\infty\}$ are the standard opens then $M \in \mcB_{\mcO_U} \cap \mcB_{\mcO_V}$. On the other hand, it is not in the meet which is $\mcB_{\mcO_{\PP^1}} = 0$.

Of course it gets worse. Let us try to apply (not a) Lemma~\ref{notalemma} to the smashing ideal $0$, using the above $M$. Any non-trivial smashing localization factors via a localization removing a single point (which of course is not unique and may depend on the localization) from $\PP^1$, and the corresponding Serre ideal will contain $M$, i.e.\ the localization will kill $f$. So the maximal smashing ideal whose associated Serre ideal doesn’t contain M must be 0.

Alas, $0$ is not prime: we have that $\loc(k(0))$ and $\loc(k(\infty))$ are smashing ideals which are non-zero but their intersection is trivial (we already saw this above). Whoops…
\end{ex}

As emphasized in the original version of this article Rose Wagstaffe, in her thesis \cite[Theorem 6.2.2]{WagstaffeThesis}, gives an independent argument that implies the frame of smashing ideals is spatial using model theoretic techniques. Unfortunately, the same issue appears in Wagstaffe's 6.1.22. The issue is the reference to \cite{KrTele}*{Lemma 4.11} which is used to argue that the union of two closed tt-Ziegler subsets is closed. In that article Krause uses the wrong definition of exact ideal, i.e.\ one that doesn't actually correspond to a smashing localization (some discussion of this is in text following \cite{KrCQ}*{Corollary 12.5}). The problem is that an intersection of exact ideals (with the correct definition) need not be exact.

\begin{ex}\label{counter2}
Again, it gets worse. The closed subsets relevant to \cite[Theorem 6.2.2]{WagstaffeThesis} are not closed under binary unions as we now show. This is not intended to be read independently of Wagstaffe's proof and so we don't repeat the necessary set-up from her work. In order to make the comparison clear we will also use some model theoretic language below (without defining it) but the interested reader who is unfamiliar with these terms will find an excellent introduction in Wagstaffe's thesis.

The flavor of the example is similar to Example~\ref{counter1} and we again work with $\sfT = \sfD(\QCoh \PP^1)$ and consider the non-zero map $f\colon \mcO \to \Sigma\mcO(-2)$ corresponding to the Euler sequence, which vanishes when restricting to any proper open subset of $\PP^1$. Let $F$ be the coherent functor given by taking the image of $f$, i.e.\ take the cokernel of the image of the map $\Sigma\mcO(-2) \to \Sigma \mcO(-1)^2$ under the contravariant Yoneda embedding. (Thus $F$ lives in the category of additive functors $[\sfT^c, \Modu \ZZ]$, i.e.\ of left rather than right $\sfT^c$-modules, and is in an appropriate sense dual to the $M$ of Example~\ref{counter1}.)

Consider the definable subcategory $\sfD$ defined by $F$ and all its suspensions. This is determined by the indecomposable pure-injectives $P$ such that 
\[
\xymatrix{
\RHom(\Sigma\mcO(-2), P) \ar[rr]^-{\RHom(f, P)} && \RHom(\mcO, P)
}
\]
is zero (really we only need this map to be zero on cohomology, but this distinction won't come up below).

The indecomposable pure-injective objects of $\sfT$ are completely understood, see for instance \cite{KS17}*{Proposition~4.4.1}, and are given by the indecomposable torsion sheaves, the Pr\"{u}fer sheaves, the adic sheaves, the sheaf of rational functions, and the line bundles $\mcO(i)$. The indecomposable torsion sheaves, the Pr\"{u}fer sheaves, the adic sheaves, and the sheaf of rational functions all come from proper open subsets of $\PP^1$ via pushforward and so they will all satisfy $\RHom(f,P)=0$. On the other hand, using the Euler sequence and the fact that $\RHom(\mcO(-2), \mcO(-1)) = 0$ we see that $\mcO(-2)$ is not in the given definable subcategory, since $\RHom(f, \mcO(-2))$ is an isomorphism.

We have constructed for ourselves a proper definable subcategory $\sfD$ which contains all torsion sheaves, Pr\"{u}fer sheaves, adics sheaves, and the sheaf of rational functions. Now consider the standard affine open cover $\PP^1 = U \cup V$, so $U$ and $V$ are each a copy of $\AA^1$ obtained by excluding $0$ and $\infty$ respectively. The corresponding subcategories $\sfD(\QCoh U)$ and $\sfD(\QCoh V)$ of $\sfT$ are definable, and hence are determined by the pure-injectives they contain. These are precisely the sheaf of rational functions, and the adic, Pr\"{u}fer, and torsion sheaves for points in $U$ and $V$ respectively.

The argument in Wagstaffe's 6.1.22 asserts that the definable hull of $\sfD(\QCoh U) \cup \sfD(\QCoh V)$ is again a definable tensor ideal. Since $\PP^1 = U\cup V$ this would force the definable hull to be $\sfT$ as it contains all the torsion sheaves. On the other hand, both $\sfD(\QCoh U)$ and $\sfD(\QCoh V)$ are contained in the $\sfD$ we constructed above and $\sfD \subsetneq \sfT$. Hence the definable hull of $\sfD(\QCoh U) \cup \sfD(\QCoh V)$ is a proper subcategory of $\sfT$ and so cannot be a tensor ideal.
\end{ex}

%
%
%


\bibliography{greg_bib}

\end{document}